\RequirePackage{amsmath}
\documentclass{llncs}

\usepackage[utf8]{inputenc}
%fonts, math
\usepackage{latexsym}
\usepackage[T1]{fontenc}
\usepackage{amssymb}
\usepackage{stmaryrd}
\usepackage{marvosym}
\usepackage{mdwmath}
\usepackage{mdwtab}
%\usepackage{classico}
%\renewcommand\sfdefault{uop}
%\usepackage[sfdefault]{classico}
%\renewcommand\familydefault{\sfdefault}

%graphics
\usepackage{graphicx}
\usepackage[cmyk]{xcolor}
\usepackage{tikz}

%Tables
\usepackage{tabularx}
\usepackage{array}
\usepackage{eqparbox}
\usepackage{booktabs}

%\usepackage[inline]{asymptote}

%\usepackage{subfig}
%\usepackage{proof}
%referencing
\usepackage{url}
\usepackage{hyperref}

%standard
%\newcommand{\llan}{\left\langle\underline }
%\newcommand{\rlan}{\right\rangle}

%Mereology
\newcommand{\pc}{\mathbf{P}}
\newcommand{\oc}{\mathbf{O}}

\newcommand{\pp}{\mathbb{P}}

%Growth functions

%Mix

\newcommand{\ideal}{$\sigma$-ideal}

%\newcommand{\uss}{$\underline{S}$}
%RYS

%Linguistic

%\newenvironment{mitemize}{\renewcommand \labelitemi{*}\begin{itemize}}{\end{itemize}}

%%Notes: Pre-defined

%\begin{proof} <text> \qed  \end{proof}

%\begin{corollary} <text> \end{corollary}
%\begin{lemma} <text> \end{lemma}
%\begin{proposition} <text> \end{proposition}
%\begin{theorem} <text> \end{theorem}

%\begin{definition} <text> \end{definition}
%\begin{example} <text> \end{example}
%\begin{exercise} <text> \end{exercise}
%\begin{note} <text> \end{note}
%\begin{problem} <text> \end{problem}
%\begin{question} <text> \end{question}
%\begin{remark} <text> \end{remark}
%\begin{solution} <text> \end{solution}

%\spnewtheorem{<env_nam>}[<num_like>]{<caption>}
%{<cap_font>}{<body_font>}
%\spnewtheorem{mainth}[theorem]{Main Theorem}{\bfseries}{\itshape}
%\spnewtheorem*{<env_nam>}{<caption>}{<cap_font>}{<body_font>}
%\newcommand{\ml}{\mathfrak{L}}
\begin{document}

\setcounter{page}{1}
%\begin{frontmatter}
 
\title{Generalized Ideals and Co-Granular Rough Sets }

\author{\textbf{A. Mani}}
\institute{Department of Pure Mathematics\\
University of Calcutta\\
9/1B, Jatin Bagchi Road\\
Kolkata(Calcutta)-700029, India\\
\email{$a.mani.cms@gmail.com$}\\
Homepage: \url{http://www.logicamani.in}}

\maketitle

\begin{abstract}
Lattice-theoretic ideals have been used to define and generate non granular rough approximations over general approximation spaces over the last few years by few authors. The goal of these studies, in relation based rough sets, have been to obtain nice properties comparable to those of classical rough approximations. In this research paper, these ideas are generalized in a severe way by the present author and associated semantic features are investigated by her. Granules are used in the construction of approximations in implicit ways and so a concept of co-granularity is introduced. Knowledge interpretation associable with the approaches is also investigated. This research will be of relevance for a number of logico-algebraic approaches to rough sets that proceed from point-wise definitions of approximations and also for using alternative approximations in spatial mereological contexts involving actual contact relations. The antichain based semantics invented in earlier papers by the present author also applies to the contexts considered. 

\keywords{Co-Granular Approximations by Ideals, High Operator\\ Spaces, Generalized Ideals, Rough Objects, Granular operator Spaces, Algebraic Semantics, Knowledge, Mereotopology, Rough Spatial Mereology, GOSIH}
\end{abstract}

\section{Introduction}

In general rough set theory that specifically targets information systems as the object of study, approximations are defined relative to information systems and rough objects of various kinds are studied \cite{AM240,ppm2,YL96}. These approximations may be defined relative to some concept of granules or they may be defined without direct reference to any concept of granules or granulations. The corresponding approximations are in general not equivalent. Among the latter class, few new approximations have been studied in \cite{AT2011,ABA2008,KYG2016} over general approximation spaces of the form $(X, R)$ with $X$ being a set and $R$ being at least a reflexive relation. \emph{In these approximations, a point is in an approximation of a subset of $X$ if it satisfies a condition that involves lattice ideals of the Boolean power set lattice}. The formalism in the overview paper \cite{KYG2016} makes use of a more laborious formalism - but is essentially equivalent to what has been stated in the last sentence. The significance of the obtained results and potential application contexts are not explored in the three papers mentioned \cite{AT2011,ABA2008,KYG2016} in sufficient detail and many open problems remain hidden.

In this research paper, a definition of co-granularity of approximations is introduced, the methodology is generalized to specific modifications of granular operator spaces \cite{AM6999,AM9114} (called co-granular operator spaces) and in particular to lattices generated by collections of sets and lattice ideals, connections with general approximation spaces (or adjacency spaces) are dropped, connections with granular operator spaces are established, issues in possible semantics of the generalized approach are computed, knowledge interpretation in the contexts are proposed, meaningful examples are constructed,ideal based rough approximations are shown to be natural in spatial mereological contexts and related problems are posed.  One meaning that stands out in all this is that \emph{if a property has little to do with what something is not, then that something has the property in an approximate sense}. This idea might work in some contexts - the developed/invented formalisms suggest some restrictions on possible contexts.

Ideals and filters have been used by the present author in algebraic semantics of general rough sets in some of her earlier papers like \cite{AM105,AM240,AM3600,AM9501}. Concepts of rough ideals have also been studied by different authors in specific algebras (see for example \cite{QQL2014,EHB2012})- these studies involve the use of rough concepts within algebras. The methodology of the present paper does not correspond to those used in the mentioned papers in a direct way.
   
In the next section, some of the essential background is mentioned. In the third section, generalized set theoretic frameworks are introduced and properties of approximations are proved. In the following section, co-granular operator spaces are defined and studied. In the fifth section, the meaning of the approximations and generalizations are explained for the first time and both abstract and concrete examples are constructed. Mereotopological approximations are invented/developed over very recent work on actual contact algebras in the sixth section.

\section{Background}

By an \emph{Information System} $\mathcal{I}$, is meant a tuple of the form \[\mathcal{I}\,=\, \left\langle \mathbb{O},\, At,\, \{V_{a} :\, a\in At\},\, \{f_{a} :\, a\in At\}  \right\rangle \]
with $\mathbb{O}$, $At$ and $V_{a}$ being respectively sets of \emph{Objects}, \emph{Attributes} and \emph{Values} respectively. In general the valuation has the form $f_{a}:\, O\,\longmapsto \, \wp(V)$, where $V\,=\, \bigcup V_{a}$ (as in indeterminate information systems). These can be used to generate various types of relational, covering or relator spaces which in turn relate to approximations of different types and form a substantial part of the problems encountered in general rough set theories. One way of defining an indiscernibility relation $\sigma$ is as below:

For $x,\, y\,\in\, \mathbb{O} $ and $B\,\subseteq\, At $, $(x,\,y)\,\in\, \sigma $ if and only if $(\forall a\in B)\, \nu(a,\,x)\,=\, \nu (a,\, y) $. In this case $\sigma$ is an equivalence relation. Lower and upper approximations, rough equalities are defined over it and topological algebraic semantics can be formulated over \emph{roughly equivalent} objects (or subsets of attributes) through extra operations. Duality theorems, proved for \emph{pre-rough} algebras defined in \cite{BC1}, are specifically for structures relation isomorphic to the approximation space $\left(\mathbb{O}, \sigma \right)$. This is also true of the representation results in \cite{AM3,du,IE2013}. But these are not for information systems - optimal concepts of \emph{isomorphic information systems} are considered by the present author in a forthcoming paper.

In fact in \cite{AM3}, it has been proved by the present author that 
\begin{theorem}
For every super rough algebra $S$, there exists an approximation space $X$ such that the super rough set algebra generated by $X$ is isomorphic to $S$.
\end{theorem}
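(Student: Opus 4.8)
The plan is to follow the standard pattern of Stone-type representation theorems: build the approximation space from the dual of the underlying lattice structure of $S$, and then recover the equivalence relation from the lower approximation operator. First I would isolate the collection of \emph{exact} elements $D = \{x \in S : Lx = x\}$, which by the axioms of a super rough algebra should coincide with $\{x : x = \neg L \neg x\}$ and form a Boolean subalgebra of $S$. Applying the classical Stone representation theorem to $D$ yields a set $\mathbb{O}$---the set of ultrafilters of $D$---together with a Boolean embedding of $D$ into $\wp(\mathbb{O})$ sending each exact element $e$ to the set of ultrafilters containing it.

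Next I would recover the equivalence relation. The crucial observation is that in the intended model every element $a \in S$ is reconstructible from the pair $(La,\, Ua)$ of exact elements, and the admissible such pairs are exactly the exact sets $P \subseteq Q$ whose difference $Q \setminus P$ is saturated with respect to a partition. The task therefore reduces to defining $\sigma$ on $\mathbb{O}$ so that the $\sigma$-saturated subsets correspond precisely to the images of exact elements that can appear as an upper-minus-lower difference. I would declare two ultrafilters to be $\sigma$-related when they are not separated by any element of a distinguished generating family read off from the atomic structure of the approximations, and then verify that $\sigma$ is reflexive, symmetric and transitive.

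With $X = (\mathbb{O},\, \sigma)$ in hand, I would define $\phi : S \to \mathcal{R}(X)$, where $\mathcal{R}(X)$ denotes the super rough set algebra generated by $X$, by letting $\phi(a)$ be the rough set whose lower and upper approximations are the Stone images of $La$ and $Ua$ respectively. The routine part is checking that $\phi$ preserves $\vee,\, \wedge$, the de Morgan negation $\neg$, and the operators $L$ and $U$; this follows because the Stone map is a Boolean homomorphism on $D$ and because the defining identities of a super rough algebra express each composite operation in terms of $L$, $U$ and $\neg$ applied to exact elements. Injectivity of $\phi$ follows from the representability axiom that distinct elements of $S$ must differ in their lower or their upper approximation.

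The main obstacle will be surjectivity: showing that \emph{every} rough set over $X$---every admissible pair $(P,\, Q)$ of $\sigma$-saturated exact sets with $P \subseteq Q$---is hit by some $a \in S$. This is exactly where the additional strength of a \emph{super} rough algebra (beyond a mere pre-rough algebra, in the sense of \cite{BC1}) must be invoked: surjectivity amounts to an internal saturation condition guaranteeing that for any exact $p \le q$ with the correct compatibility there exists $a \in S$ with $La = p$ and $Ua = q$. I expect the delicate technical heart of the argument to be verifying that the super axioms yield exactly this existence statement, while simultaneously confirming that the chosen $\sigma$ produces no spurious rough sets outside the range of $\phi$.
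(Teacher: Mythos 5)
The paper does not actually reproduce a proof of this theorem; it is quoted as a result established in \cite{AM3}, so your proposal can only be judged against what any such representation must accomplish. Judged that way, there is a concrete obstruction to your construction: taking $\mathbb{O}$ to be the full Stone space (all ultrafilters) of the exact-element algebra $D$ makes the claimed isomorphism impossible whenever $D$ is infinite. In the super rough set algebra generated by $X=(\mathbb{O},\sigma)$, the exact elements are the pairs $(P,P)$ with $P$ a union of $\sigma$-classes, and every $\sigma$-class is itself definable; so if $\phi$ were onto, each $\sigma$-class would have to be the Stone image $\hat{a}$ of some $a\in D$, hence clopen in the Stone topology. But a compact space admits no partition into infinitely many nonempty clopen sets (such a partition would be an open cover with no finite subcover), so $\sigma$ could have only finitely many classes, the definable algebra of $X$ would be finite, and $D$, which the Stone map embeds injectively into it, would be finite as well. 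Thus for infinite $S$ your map $\phi$ can at best be an embedding, never onto the super rough set algebra generated by $X$; this is precisely the gap between embedding/duality results of the pre-rough type \cite{BC1} and the isomorphism asserted here.

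The repair is to build $X$ not from all ultrafilters but from the atoms of $D$ (the principal ultrafilters), and this is where the ``super'' axioms must enter at the beginning rather than at the end: one must first prove from them that $D$ is a complete and atomic Boolean algebra, so that $D\cong\wp(\text{atoms of } D)$, and one must also be able to detect which atoms represent singleton equivalence classes, since a pair $(P,Q)$ of definable sets with $P\subseteq Q$ is a rough set of $(X,\sigma)$ only when $Q\setminus P$ contains no singleton class --- detecting this is exactly what the rough-equality (super) structure is for. Your proposal leaves all of this open: $\sigma$ is never actually defined (only described as relating ultrafilters ``not separated by any element of a distinguished generating family''), the singleton-class constraint is compressed into the phrase ``the correct compatibility'', and the surjectivity argument --- which is the entire content of the theorem beyond routine homomorphism checks --- is explicitly deferred. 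As it stands this is a plausible plan whose hardest steps are missing, and whose one definite structural commitment (the full ultrafilter space) is provably the wrong one.
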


The concept of \emph{inverse problem} was introduced by the present author in \cite{AM3} and was subsequently refined in \cite{AM240}. In simple terms, the problem is a generalization of the duality problem which may be obtained by replacing the semantic structures with parts thereof. Thus the goal of the problem is to fit a given set of approximations and some semantics to a suitable rough process originating from an information system. From the practical perspective, a few (as opposed to one) theoretical frameworks may be suitable for formulating the problem itself. The theorem mentioned above is an example of a solution of the \emph{inverse problem} in the associated context.

The definition of approximations maybe granular, point wise, abstract or otherwise. 
In simple terms, \emph{granules} are the subsets (or objects) that generate approximations and \emph{granulations} are the collections of all such granules in the context. For more details see \cite{AM240,AM3930}. In this paper a variation of generalized granular operator spaces, introduced and studied by the present author in \cite{AM6999,AM9114}, will serve as the main framework for most considerations. For reference, related definitions are mentioned below.

\begin{definition}\label{gos}
A \emph{Granular Operator Space}\cite{AM6999} \textsf{GOS} $S$ is a structure of the form $S\,=\, \left\langle \underline{S}, \mathcal{G}, l , u\right\rangle$ with $\underline{S}$ being a set, $\mathcal{G}$ an \emph{admissible granulation}(defined below) over $S$ and $l, u$ being operators $:\wp(\underline{S})\longmapsto \wp(\underline{S})$ ($\wp(\underline{S})$ denotes the power set of $\underline{S}$) satisfying the following ($\underline{S}$ is replaced with $S$ if clear from the context. \textsf{Lower and upper case alphabets may denote subsets} ):

\begin{align*}
a^l \subseteq a\,\&\,a^{ll} = a^l \,\&\, a^{u} \subseteq a^{uu}  \\
(a\subseteq b \longrightarrow a^l \subseteq b^l \,\&\,a^u \subseteq b^u)\\
\emptyset^l = \emptyset \,\&\,\emptyset^u = \emptyset \,\&\,\underline{S}^{l}\subseteq S \,\&\, \underline{S}^{u}\subseteq S.
\end{align*}

In the context of this definition, \emph{Admissible Granulations} are granulations $\mathcal{G}$ that satisfy the following three conditions ($t$ is a term operation formed from the set operations $\cup, \cap, ^c, 1, \emptyset$):

\begin{align*}
(\forall a \exists
b_{1},\ldots b_{r}\in \mathcal{G})\, t(b_{1},\,b_{2}, \ldots \,b_{r})=a^{l} \\
\tag{Weak RA, WRA} \mathrm{and}\: (\forall a)\,(\exists
b_{1},\,\ldots\,b_{r}\in \mathcal{G})\,t(b_{1},\,b_{2}, \ldots \,b_{r}) =
a^{u},\\
\tag{Lower Stability, LS}{(\forall b \in
\mathcal{G})(\forall {a\in \wp(\underline{S}) })\, ( b\subseteq a\,\longrightarrow\, b \subseteq a^{l}),}\\
\tag{Full Underlap, FU}{(\forall
a,\,b\in\mathcal{G})(\exists
z\in \wp(\underline{S}) )\, a\subset z,\,b \subset z\,\&\,z^{l} = z^{u} = z,}
\end{align*}
\end{definition}

The concept of admissible granulation was defined for rough Y-systems \textsf{RYS} (a more general framework due to the present author in \cite{AM240}) using parthoods instead of set inclusion and relative to \textsf{RYS}, $\pc = \subseteq$, $\pp = \subset$ in granular operator spaces \cite{AM6999}. 
The concept of \emph{generalized granular operator spaces} has been introduced in \cite{AM9114} by the present author as a proper generalization of that of granular operator spaces. The main difference is in the replacement of $\subset$ by arbitrary \emph{part of} ($\pc$) relations in the axioms of admissible granules and inclusion of $\pc$ in the signature of the structure. 

\begin{definition}
A \emph{General Granular Operator Space} (\textsf{GSP}) $S$ is a structure of the form $S\,=\, \left\langle \underline{S}, \mathcal{G}, l , u, \pc \right\rangle$ with $\underline{S}$ being a set, $\mathcal{G}$ an \emph{admissible granulation}(defined below) over $S$, $l, u$ being operators $:\wp(\underline{S})\longmapsto \wp(\underline{S})$ and $\pc$ being a definable binary generalized transitive predicate (for parthood) on $\wp(\underline{S})$ satisfying the same conditions as in Def.\ref{gos} except for those on admissible granulations (Generalized transitivity can be any proper nontrivial generalization of parthood (see \cite{AM9501}). $\pp$ is  proper parthood (defined via $\pp ab$ iff $\pc ab \,\&\,\neg \pc ba$) and $t$ is a term operation formed from set operations on the powerset $\wp(S)$):

\begin{align*}
(\forall x \exists
y_{1},\ldots y_{r}\in \mathcal{G})\, t(y_{1},\,y_{2}, \ldots \,y_{r})=x^{l} \\
\tag{Weak RA, WRA} \mathrm{and}\: (\forall x)\,(\exists
y_{1},\,\ldots\,y_{r}\in \mathcal{G})\,t(y_{1},\,y_{2}, \ldots \,y_{r}) =
x^{u},\\
\tag{Lower Stability, LS}{(\forall y \in
\mathcal{G})(\forall {x\in \wp(\underline{S}) })\, ( \pc yx\,\longrightarrow\, \pc yx^{l}),}\\
\tag{Full Underlap, FU}{(\forall
x,\,y\in\mathcal{G})(\exists
z\in \wp(\underline{S}) )\, \pp xz,\,\&\,\pp yz\,\&\,z^{l} = z^{u} = z,}
\end{align*}
\end{definition}

There are ways of defining rough approximations that do not fit into the above frameworks and the present paper is mainly about specific such cases.

\subsection{Ideals on Posets}\label{wth}

A \emph{lattice ideal} $K$ of a lattice $L= (L, \vee, \wedge )$ is a subset of $L$ that satisfies the following ($\leq$ is assumed to the definable lattice order on $L$):
\begin{align}
(\forall a \in L)(\forall b\in K)(a\leq b \longrightarrow a\in K) \tag{o-Ideal}\\
(\forall a, b\in K)\, a\vee b \in K   \tag{Join Closure}
\end{align}

An ideal $P$ in a lattice $L$ is \emph{prime} if and only if $(\forall a, b )(a\wedge b\in P \longrightarrow a \in P \text{ or } b\in P)$. $Spec(L)$ shall denote the set of all prime ideals. Maximal lattice filters are the same as ultrafilters. In Boolean algebras, any filter $F$ that satisfies $(\forall a) a\in F \text{ or } a^c \in F$ is an ultra filter.  \emph{Chains} are subsets of a poset in which any two elements are comparable, while \emph{antichains} are subsets of a poset in which no two distinct elements are comparable. Singletons are both chains and antichains.

\subsection{Ideal Based Framework}

The approximations in \cite{KYG2016} are more general than the ones introduced and studied in \cite{AT2011,ABA2008}. A complete reformulation of the main definition and approximation is presented in this subsection. These approximations are not granular in any obvious way and need not fit into generalized granular operator spaces.

\begin{definition}
\begin{itemize}
\item {Let $\langle X, R \rangle$  be a general approximation space with $X$ being a set and $R$ being a reflexive binary relation on $X$,}
\item {$\langle \wp (X), \cup, \cap, \emptyset, X \rangle $ be the Boolean lattice on the power set of $X$. Any lattice ideal in the Boolean lattice would be referred to as an \emph{ideal} and the collection of all ideals would be denoted by $\mathcal{I}(X)$ (this is algebraic distributive lattice ordered and the implicit $\sigma$ used is the Boolean order $\subseteq$ ) and $\mathbb{I}$ a fixed ideal in it.}
\item {Let $[x]^R = \{a : a \in X \,\&\, Rxa \}$ and $<x> = \bigcap \{ [b]^R:\, b\in X \,\&\,{x\in [b]^R}  \}$. }
\item {$(\forall A\in \wp(X))\, A^{l_\kappa} = \{a : a \in A\, \&\, <a> \cap A^c \in \mathbb{I} \}$}
\item {$(\forall A\in \wp(X))\, A^{u_\kappa} = \{a : a \in X\, \&\,<a>\cap A \notin \mathbb{I}\} \cup A$ }
\end{itemize}
\end{definition}

The approximations have properties similar to those of approximations in classical rough set theory using the point wise definition of approximations. This is mainly due to the nature of the sets of the form $<a>$.

\begin{theorem}
All of the following hold for any subset $A, B\subseteq  X$ in the context of the above definition:
\begin{itemize}
\item {$A^{l_\kappa} \subseteq A \subseteq A^{u_\kappa} $ and $\emptyset^{u_\kappa}=\emptyset , X^{l_\kappa} = X $}
\item {$A\subset B\longrightarrow A^{l_\kappa}\subseteq B^{l_\kappa} \,\&\,A^{u_\kappa}\subseteq B^{u_\kappa} $}
\item {$A^{{l_\kappa}{l_\kappa}} = A^{l_\kappa}$, $A^{{u_\kappa}{u_\kappa}} = A^{u_\kappa}$; $(\forall A\in \mathbb{I})\, A^{u_\kappa} = A$}
\item {$(A\cap B)^{l_\kappa} = A^{l_\kappa}\cap B^{l_\kappa}$ and $(A\cup B)^{l_\kappa} \supseteq A^{l_\kappa}\cup B^{l_\kappa}$ }
\item {$(A\cup B)^{u_\kappa} = A^{u_\kappa}\cup B^{u_\kappa}$ and $(A\cap B)^{u_\kappa} \subseteq A^{u_\kappa}\cap B^{u_\kappa}$ }
\item {$A^{u_\kappa}=(A^{c})^{l_\kappa c}$, $A^{l_\kappa}=(A^{c})^{u_\kappa c}$ and $(\forall A^c\in \mathbb{I})\,  A^{l_\kappa} = A$.}
\end{itemize}
$\tau_R^* = \{A : A^{l_\kappa} = A\}$ is a topology.
\end{theorem}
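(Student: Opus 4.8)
The plan is to verify the three defining axioms of a topology for the family $\tau_R^* = \{A : A^{l_\kappa} = A\}$: that it contains $\emptyset$ and $X$, that it is closed under finite intersections, and that it is closed under arbitrary unions. For the first axiom, note that $\emptyset^{l_\kappa} = \emptyset$ follows immediately from the definition, since $A^{l_\kappa}$ is by construction a subset of $A$; and $X^{l_\kappa} = X$ is already recorded among the bulleted properties of this theorem. Hence $\emptyset, X \in \tau_R^*$.

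For closure under finite intersections it suffices, by induction, to treat the binary case. If $A, B \in \tau_R^*$, so that $A^{l_\kappa} = A$ and $B^{l_\kappa} = B$, then the distributivity property $(A \cap B)^{l_\kappa} = A^{l_\kappa} \cap B^{l_\kappa}$ from the list yields $(A \cap B)^{l_\kappa} = A \cap B$, whence $A \cap B \in \tau_R^*$.

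The step I expect to be the main obstacle is closure under \emph{arbitrary} unions, since the lower operator does not distribute over unions in general: only the one-sided inclusion $(A \cup B)^{l_\kappa} \supseteq A^{l_\kappa} \cup B^{l_\kappa}$ is available, so a purely algebraic manipulation cannot be pushed through for an infinite family. Here I would argue pointwise, exploiting the downward closure of the fixed ideal $\mathbb{I}$ (its o-Ideal axiom). Let $\{A_i\}_{i \in I} \subseteq \tau_R^*$ and put $A = \bigcup_{i\in I} A_i$. The inclusion $A^{l_\kappa} \subseteq A$ is immediate from the first bulleted property. For the reverse, take $a \in A$; then $a \in A_j$ for some $j$, and since $A_j^{l_\kappa} = A_j$ we have $<a> \cap A_j^c \in \mathbb{I}$. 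Because $A_j \subseteq A$ we have $A^c \subseteq A_j^c$, so $<a> \cap A^c \subseteq <a> \cap A_j^c$; the o-Ideal axiom then forces $<a> \cap A^c \in \mathbb{I}$, and together with $a \in A$ this gives $a \in A^{l_\kappa}$. Thus $A \subseteq A^{l_\kappa}$, so $A^{l_\kappa} = A$ and $A \in \tau_R^*$, which completes the verification that $\tau_R^*$ is a topology.
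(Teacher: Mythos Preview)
Your argument for the topology claim is correct: the containment of $\emptyset$ and $X$, closure under finite intersections via the identity $(A\cap B)^{l_\kappa}=A^{l_\kappa}\cap B^{l_\kappa}$, and the pointwise verification of closure under arbitrary unions using downward closure of the fixed ideal $\mathbb{I}$ are all sound and are exactly the way this is usually done.

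As to comparison with the paper: this theorem sits in the background subsection as a reformulation of results from the cited references, and the paper supplies \emph{no proof} for it. The closely analogous theorem for the IAD approximations $l_k,u_k$ in the following section is proved in the paper, but that version does not even state a topology assertion, so the paper nowhere gives an argument that $\tau_R^*$ is a topology. Your proof therefore addresses something the paper leaves entirely to the literature.

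One remark on scope: you treat only the final sentence of the theorem, taking all of the bulleted items as already established and invoking them freely. If the intent was to prove the full statement, those items also require verification; the paper's proof of the IAD analogue shows how each of them goes, and essentially the same arguments transfer since the Boolean power-set lattice is a special case of the ring-of-subsets setting considered there.
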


\section{Set-Theoretic Generalization of Ideal-Based Framework }\label{sei}

Set theoretic generalizations of the approach in \cite{AT2011,ABA2008,KYG2016} are proposed in this section by the present author. 

If $R$ is a binary relation on a set $S$, then for any $x\in S$, the \emph{successor neighborhood } $[x]_R$  generated by $x$ is the set $[x]_R = \{a : \, a\in S \, \&\, Rax\}$, while the \emph{predecessor neighborhood} $[x]^R$ is the set $[x]^R = \{a : \, a\in S \, \&\, Rxa\}$. 

\begin{definition}
\begin{itemize}
\item {Let $\langle X, R \rangle$  be a general approximation space with $X$ being a set and $R$ being a reflexive binary relation on $X$,}
\item {$\mathcal{X}$ be a distributive lattice (a ring of subsets of $X$). Let $\mathcal{I}(X)$ be the lattice of lattice ideals of $\mathcal{X}$ and $\mathbb{I}\in \mathcal{I}(X)$ }
\item {Let $[x]^R = \{a : a \in X \,\&\, Rxa \}$ and $<x> = \bigcap \{ [b]^R:\, b\in X \,\&\,{x\in [b]^R}  \}$. }
\item {$(\forall A\in \wp(X))\, A^{l_k} = \{a : a \in A\, \&\, <a> \cap A^c \in \mathbb{I} \}$}
\item {$(\forall A\in \wp(X))\, A^{u_k} = \{a : a \in X\, \&\,<a>\cap A \notin \mathbb{I} \} \cup A$ }
\item {If $\mathcal{I} (X)$ is replaced by $Spec(X)$ in the last two statements, then the resulting lower and upper approximations will be denoted respectively by $l_p$ and $u_p$.}
\end{itemize}
The approximations will be referred to as \emph{Distributive set approximations by ideals} (IAD approximations). If in the second condition if $\mathcal{X}$ is an algebra of subsets of $X$ instead, then the definitions of the approximations can be improved as below: 
\begin{itemize}
\item {$(\forall A\in \mathcal{X})\, A^{l_+} = \{a : a \in A\, \&\, <a> \setminus A \in \mathbb{I} \}$}
\item {$(\forall A\in \mathcal{X})\, A^{u_+} = \{a : a \in X\, \&\,<a>\cap A \notin \mathbb{I} \} \cup A$ }
\end{itemize}
These approximations will be referred to as \emph{Set difference approximations by ideals} (IASD approximations). 
\end{definition}

When the ideals refer to a ring of subsets, the operations used in the definition of IAD approximations refer to an algebra of sets over $X$. IASD approximations are better behaved. 

\begin{proposition}
In the above, all of the following hold:
\begin{itemize}
\item {For $a, b\in X $, if $a\in <b>$ then $<a> \subseteq <b>$.}
\item {If $\tau ab$ if and only $a\in <b>$ then $\tau$ is a reflexive, transitive and weakly antisymmetric relation in the sense, if $\tau ab \,\&\, \tau ba$ then $<a>=<b>$. }
\end{itemize}
\end{proposition}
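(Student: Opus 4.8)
The plan is to deduce the entire second bullet from the first, so the only genuine work lies in the monotonicity statement of the first bullet. I would begin by unwinding the nested intersection defining $<x>$. Writing the inner condition $b\in [c]^R$ as $Rcb$ and the membership $a\in[c]^R$ as $Rca$, the statement $a\in <b>$ says exactly that $a$ lies in $[c]^R$ for every $c$ with $Rcb$; that is, $(\forall c)(Rcb \longrightarrow Rca)$, equivalently the index inclusion $\{c : Rcb\}\subseteq\{c : Rca\}$. This translation is the one step that requires care, and it is really just a faithful reading of the definition of $<b>$.

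Once the translation is in hand the first bullet is immediate. By definition $<a>$ is the intersection of the family $\{[c]^R : Rca\}$ and $<b>$ the intersection of $\{[c]^R : Rcb\}$. The inclusion $\{c : Rcb\}\subseteq\{c : Rca\}$ says that every set entering the intersection for $<b>$ also enters the intersection for $<a>$, so the latter intersection is over a larger index family and can only be smaller; hence $<a>\subseteq <b>$. I would state this ``larger family, smaller intersection'' principle explicitly, as it is the crux of the argument.

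For the second bullet I would then derive the three properties formally. Reflexivity, namely $a\in <a>$, holds because every $[c]^R$ appearing in the intersection $<a>$ carries the defining condition $Rca$, and $a\in[c]^R$ is by definition precisely $Rca$; thus $a$ belongs to each such $[c]^R$ and therefore to their intersection. Reflexivity of $R$ only guarantees that the index family is nonempty and is not otherwise needed. Transitivity follows by invoking the first bullet: from $\tau bc$, that is $b\in <c>$, we obtain $<b>\subseteq <c>$, and then $a\in <b>\subseteq <c>$ yields $\tau ac$. Weak antisymmetry is the same device applied in both directions: $\tau ab$ gives $<a>\subseteq <b>$ and $\tau ba$ gives $<b>\subseteq <a>$, whence $<a>=<b>$.

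The main obstacle, such as it is, is purely bookkeeping: keeping the two roles of $R$ distinct (the defining condition of the outer intersection versus membership in each predecessor neighborhood $[c]^R$) and correctly reversing the inclusion when passing from index families to their intersections. No property of $R$ beyond reflexivity is used, and that only to ensure the neighborhoods involved are nonempty.
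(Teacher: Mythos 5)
Your proof is correct. The paper states this proposition without any proof (it is left as an immediate consequence of the definition of $<x>$), and your argument --- translating $a\in <b>$ into the index-set inclusion $\{c : Rcb\}\subseteq\{c : Rca\}$, reversing to $<a>\subseteq <b>$ via ``larger family, smaller intersection,'' and then deriving reflexivity, transitivity, and weak antisymmetry formally from that monotonicity --- is exactly the intended reasoning, with the nonemptiness role of reflexivity of $R$ correctly isolated.
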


\begin{theorem}
The IAD approximations are well defined and satisfy all of the following for any subsets $A$ and $B$:
\begin{align}
A^{l_k} \subseteq A \subseteq A^{u_k}\\
\emptyset^{u_k}=\emptyset \,;\, X^{l_k} = X \\
A\subset B\longrightarrow A^{l_k}\subseteq B^{l_k} \,\&\,A^{u_k}\subseteq B^{u_k} \\ 
A^{l_k l_k} = A^{l_k} ; \; A^{u_k u_k} = A^{u_k}\\
(A\cap B)^{l_k} = A^{l_k}\cap B^{l_k} ; \; (A\cup B)^{l_k}\supseteq A^{l_k}\cup B^{l_k}\\
(A\cup B)^{u_k} = A^{u_k}\cup B^{u_k} ; \; (A\cap B)^{u_k} \subseteq A^{u_k}\cap B^{u_k}
\end{align}
\end{theorem}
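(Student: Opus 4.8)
The plan is to reduce everything to two structural facts about the neighbourhood operator and two closure properties of the ideal. First I would record that $R$ reflexive gives $a\in <a>$ for every $a$ (since $Raa$ puts $[a]^R$ among the sets intersected to form $<a>$), and that the Proposition already proved supplies the transitivity statement $a\in <b>\Rightarrow\, <a>\subseteq <b>$. Second, since $A$ ranges over all of $\wp(X)$ while $\mathbb{I}$ is only an ideal of the sublattice $\mathcal{X}$, the phrase ``$<a>\cap A^c\in\mathbb{I}$'' must be read through the down-closure $\downarrow\mathbb{I}=\{S\subseteq X:(\exists K\in\mathbb{I})\,S\subseteq K\}$ taken in $\wp(X)$; I would check that $\downarrow\mathbb{I}$ is an ideal of the Boolean lattice $\wp(X)$ (it is subset-closed by construction, and union-closed because $S_1\subseteq K_1$, $S_2\subseteq K_2$ give $S_1\cup S_2\subseteq K_1\vee K_2\in\mathbb{I}$ by join closure) and that it meets $\mathcal{X}$ exactly in $\mathbb{I}$. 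Establishing this is precisely the \emph{well-definedness} claim: with it, $l_k$ and $u_k$ are genuine maps $\wp(X)\to\wp(X)$ and all the ideal manipulations below take place in the honest Boolean ideal $\downarrow\mathbb{I}$.

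With well-definedness in hand the structural items are routine. The inclusions $A^{l_k}\subseteq A\subseteq A^{u_k}$ are immediate from the shapes $A^{l_k}=\{a\in A:\ldots\}$ and $A^{u_k}=\{\ldots\}\cup A$. For the normalizations, $\emptyset\in\mathbb{I}$ forces $\emptyset^{u_k}=\{a:\emptyset\notin\mathbb{I}\}=\emptyset$ and $X^{l_k}=\{a:<a>\cap\emptyset\in\mathbb{I}\}=X$. Monotonicity comes from subset-closure of $\downarrow\mathbb{I}$: enlarging $A$ to $B$ shrinks $A^c$, so $<a>\cap B^c\subseteq <a>\cap A^c$ keeps the lower test true, while $<a>\cap A\subseteq <a>\cap B$ preserves the upper test contrapositively. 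The second halves of (5) and (6) then follow formally, since $A,B\subseteq A\cup B$ and $A\cap B\subseteq A,B$.

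The genuinely algebraic identities are the first halves of (5) and (6). Here I would distribute $<a>\cap(\cdot)$ across the Boolean operation and use the single equivalence, valid in any ideal, that $S\cup T\in\downarrow\mathbb{I}$ \ifof $S\in\downarrow\mathbb{I}$ and $T\in\downarrow\mathbb{I}$ (one direction is subset-closure, the other is join closure). Writing $(A\cap B)^c=A^c\cup B^c$ turns the lower test for $A\cap B$ into the conjunction of the lower tests for $A$ and $B$, giving $(A\cap B)^{l_k}=A^{l_k}\cap B^{l_k}$; writing $<a>\cap(A\cup B)=(<a>\cap A)\cup(<a>\cap B)$ turns the upper test for $A\cup B$ into the disjunction, giving $(A\cup B)^{u_k}=A^{u_k}\cup B^{u_k}$.

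The main obstacle, and the only place transitivity of $<\cdot>$ is essential, is idempotence. For the lower operator $A^{l_k l_k}\subseteq A^{l_k}$ is free from the inclusion property, so the work is the reverse inclusion, for which I would prove $<a>\cap(A\setminus A^{l_k})=\emptyset$ whenever $a\in A^{l_k}$: any $c$ in this set lies in $A\setminus A^{l_k}$, hence $<c>\cap A^c\notin\downarrow\mathbb{I}$, yet $c\in <a>$ forces $<c>\subseteq <a>$ and therefore $<c>\cap A^c\subseteq <a>\cap A^c\in\downarrow\mathbb{I}$, a contradiction by subset-closure. Consequently $<a>\cap(A^{l_k})^c=<a>\cap A^c\in\downarrow\mathbb{I}$, placing $a$ in $A^{l_k l_k}$. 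The dual argument handles $A^{u_k u_k}\subseteq A^{u_k}$: assuming $a\notin A^{u_k}$ gives $<a>\cap A\in\downarrow\mathbb{I}$, and for any $b\in (<a>\cap A^{u_k})\setminus A$ one has $<b>\cap A\notin\downarrow\mathbb{I}$ while $<b>\subseteq <a>$ yields $<b>\cap A\subseteq <a>\cap A\in\downarrow\mathbb{I}$, again impossible; hence $<a>\cap A^{u_k}=<a>\cap A\in\downarrow\mathbb{I}$, contradicting $a\in A^{u_k u_k}\setminus A^{u_k}$. I expect the care needed in phrasing these two collapses, and in keeping the ambient Boolean ideal $\downarrow\mathbb{I}$ consistent with the $\mathcal{X}$-ideal $\mathbb{I}$ throughout, to be the crux of the whole argument.
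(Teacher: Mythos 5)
Your proof is correct, and its mathematical engine is the same as the paper's: down-closure and join-closure of the ideal for the lattice identities and monotonicity, plus the transitivity property $a\in <b> \Rightarrow\, <a>\subseteq\, <b>$ from the preceding Proposition for idempotence. Where you genuinely add to (and improve on) the paper's own proof is in three places. First, the paper asserts well-definedness but never confronts the fact that $<a>\cap A^c$ need not belong to the sublattice $\mathcal{X}$ when $A$ ranges over all of $\wp(X)$; its own steps (e.g.\ ``$<x>\cap B^c \subset\, <x>\cap A^c \in \mathbb{I}$, hence $<x>\cap B^c\in\mathbb{I}$'') tacitly require exactly your reading through the Boolean down-closure $\downarrow\mathbb{I}$, so your lemma that $\downarrow\mathbb{I}$ is an ideal of $\wp(X)$ meeting $\mathcal{X}$ in $\mathbb{I}$ is the missing justification rather than an optional gloss. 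Second, you obtain $(A\cup B)^{l_k}\supseteq A^{l_k}\cup B^{l_k}$ and $(A\cap B)^{u_k}\subseteq A^{u_k}\cap B^{u_k}$ formally from monotonicity, whereas the paper argues them element-wise and in doing so misstates the upper-union case ($<x>\cap(A\cup B)\notin\mathbb{I}$ holds iff $<x>\cap A\notin\mathbb{I}$ \emph{or} $<x>\cap B\notin\mathbb{I}$, not ``and''); your route avoids that slip entirely. Third, the paper compresses both idempotence laws into the remark that ``the converse also holds because of the definition of $<a>$''; your explicit contradiction arguments --- showing $<a>\cap(A\setminus A^{l_k})=\emptyset$ for $a\in A^{l_k}$, and dually $<a>\cap A^{u_k}=\,<a>\cap A$ when $a\notin A^{u_k}$ --- are precisely the content that remark is standing in for, and you are right that this is where transitivity of the neighborhood operator is indispensable.
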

\begin{proof}
\begin{itemize}
\item {$A^{l_k} \subseteq A \subseteq A^{u_k} $ follows from definition}
\item {$\emptyset^{u_k}=\emptyset$ because it contains no elements. $X^{l_k} = X $ because $ <x> \cap \emptyset = \emptyset$ is a trivial ideal.}
\item {\begin{align*}
\text{Let } A\subset B \\
\text{If } x\in A^{u_k} \text{ then } <x> \cap A \notin \mathbb{I}\\
\text{So } <x>\cap B\notin \mathbb{I} \text{ and } x\in B^{u_k} \\
\text{If } x\in A^{l_k} \text{ then } <x> \cap A^c \in \mathbb{I}\\
<x>\cap B^c \subset <x>\cap A^c \in \mathbb{I}\\
\text{ and so } <x>\cap B^c \in \mathbb{I}(X) \text{ and } x\in B^{l_k}.
\end{align*}
}
\item {If $a\in A^{l_k}$ then $<a>\cap A^c \in \mathbb{I} $ and 
$<a>\cap A^c \subseteq\, <a>\cap A^{l_k c}$. The converse also holds because of the definition of $<a>$. 
So $A^{l_k l_k}= A^{l_k}$.}
\item {It is obvious that $A^{u_k}\subseteq A^{u_k u_k}$. If $x\in A^{u_k u_k}$, then $x\in A $ or $x\in <x>\cap A^{u_k}\notin \mathbb{I}$. As $<x> \cap A \subseteq <x> \cap A^{u_k}$,  so $A^{u_k u_k} = A^{u_k}$.}
\item { $x \in (A\cap B)^{l_k}$, if and only if $<x>\cap(A\cap B)^c \in \mathbb{I}$ and $x\in A\cap B$
if and only if $(<x>\cap A^c\in \mathbb{I}$ and $<x>\cap B^c\in \mathbb{I}$ and $x\in A\cap B$
So $(A\cap B)^{l_k} = A^{l_k}\cap B^{l_k}$.}
\item {$x\in (A\cup B)^{l_k}$ if and only if $<x> \cap (A\cup B)^c \in \mathbb{I}$ and $x\in A\cup B$ if and only if 
$(<x> \cap A^c)\cap (<x>\cap B^c) \in \mathbb{I}$ and $x\in A\cup B$. This implies 
$(<x> \cap A^c) \in \mathbb{I}$ and $x\in A$ or $<x>\cap B^c \in \mathbb{I}$ and $x\in B$.
So $(A\cup B)^{l_k}\supseteq A^{l_k}\cup B^{l_k}$. }
\item { $x\in (A\cup B)^{u_k}$ if and only if $x\in A\cup B$ or $<x>\cap (A\cup B) \notin \mathbb{I}$.
If $x\in A\cup B$, then $x\in A^{u_k}\cup B^{u_k}$. $<x>\cap (A\cup B) \notin \mathbb{I}$ if and only if $<x>\cap A\notin \mathbb{I}$ and $<x>\cap B \notin \mathbb{I}$. So it follows that $x\in A^{u_k}\cup B^{u_k}$ and conversely.}
\item {$x\in (A\cap B)^{u_k}$ if and only if $x\in A\cap B$ or $<x>\cap (A\cap B) \notin \mathbb{I}$.
If $x\in A\cap B$, then $x\in A^{u_k}\cap B^{u_k}$. $<x>\cap (A\cap B) \notin \mathbb{I}$ if and only if $<x>\cap A\notin \mathbb{I}$ or $<x>\cap B \notin \mathbb{I}$. So it follows that $(A\cap B)^{u_k} \subseteq A^{u_k}\cap B^{u_k}$.}
\end{itemize}
\qed
\end{proof}

Actually full complementation can be omitted and replaced with set difference. This way the approximations can be defined on subsets of the powerset.

\begin{theorem}
The IASD approximations are well defined and satisfy all of the following for any subsets $A$ and $B$ in $\mathcal{X}$:
\begin{align}
A^{l_+} \subseteq A \subseteq A^{u_+} ; \; \emptyset^{u_+}=\emptyset \,;\, X^{l_+} = X \\
A\subset B\longrightarrow A^{l_+}\subseteq B^{l_+} \,\&\,A^{u_+}\subseteq B^{u_+} \\ 
A^{l_+ l_+} = A^{l_+} ; \; A^{u_+ u_+} = A^{u_+}\\
(A\cap B)^{l_+} = A^{l_+}\cap B^{l_+} ; \; (A\cup B)^{l_+}\supseteq A^{l_+}\cup B^{l_+}\\
(A\cup B)^{u_+} = A^{u_+}\cup B^{u_+} ; \; (A\cap B)^{u_+} \subseteq A^{u_+}\cap B^{u_+}
\end{align}
\end{theorem}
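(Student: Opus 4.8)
The plan is to prove the IASD theorem by transporting the proof of the preceding IAD theorem almost verbatim, exploiting the elementary identity $<a>\setminus A = <a>\cap A^c$: every condition defining the new operators is, as a statement about subsets of $X$, the same condition that defines the $l_k,u_k$ operators. In particular the upper approximation $A^{u_+}$ is \emph{literally} the expression already used for $A^{u_k}$, so all six clauses concerning $u_+$ — inflationarity $A\subseteq A^{u_+}$, the normalisations $\emptyset^{u_+}=\emptyset$ and $X^{l_+}=X$, monotonicity, idempotence $A^{u_+u_+}=A^{u_+}$, the additivity $(A\cup B)^{u_+}=A^{u_+}\cup B^{u_+}$ and the inclusion $(A\cap B)^{u_+}\subseteq A^{u_+}\cap B^{u_+}$ — can be imported directly from the IAD theorem with no change of argument, the reasoning there never having used anything about complements in the upper case.

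For the lower approximation I would replace each occurrence of $A^c$ in the IAD argument by the relative complement taken against $<a>$, and re-express the Boolean manipulations as set-difference identities, namely $<x>\setminus(A\cap B)=(<x>\setminus A)\cup(<x>\setminus B)$ and $<x>\setminus(A\cup B)=(<x>\setminus A)\cap(<x>\setminus B)$, together with the antitone rule $A\subseteq B \Rightarrow <x>\setminus B\subseteq <x>\setminus A$. With these substitutions the containment $A^{l_+}\subseteq A$, monotonicity, the meet formula $(A\cap B)^{l_+}=A^{l_+}\cap B^{l_+}$ and the join inequality $(A\cup B)^{l_+}\supseteq A^{l_+}\cup B^{l_+}$ follow exactly as before, now reading off membership in $\mathbb{I}$ of the difference sets rather than of intersections with complements, and using the join-closure and downward-closure of the ideal in place of the Boolean lattice operations.

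The one step that genuinely needs care, and which I expect to be the main obstacle, is idempotence $A^{l_+l_+}=A^{l_+}$, since here the naive inclusion runs the wrong way for a downward-closed ideal. The inclusion $A^{l_+l_+}\subseteq A^{l_+}$ is immediate from deflationarity; for the reverse, take $a\in A^{l_+}$, so $<a>\setminus A\in\mathbb{I}$, and the goal is $<a>\setminus A^{l_+}\in\mathbb{I}$. Because $A^{l_+}\subseteq A$ gives $<a>\setminus A\subseteq <a>\setminus A^{l_+}$, I would instead prove the two sets equal: if $x\in <a>$ and $x\in A\setminus A^{l_+}$ then $<x>\setminus A\notin\mathbb{I}$, yet $x\in <a>$ forces $<x>\subseteq <a>$ by the Proposition above, whence $<x>\setminus A\subseteq <a>\setminus A\in\mathbb{I}$ — a contradiction. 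Hence $<a>\setminus A^{l_+}=<a>\setminus A\in\mathbb{I}$, which is precisely the invocation of the Proposition that makes the neighbourhoods $<a>$ behave like indiscernibility classes.

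Finally, the well-definedness asserted in the statement is the payoff of strengthening $\mathcal{X}$ from a ring to an \emph{algebra} of subsets: closure under set difference is exactly what guarantees that the tested sets $<a>\setminus A$ and $<a>\cap A$ are legitimate elements against which membership in the ideal $\mathbb{I}$ of $\mathcal{X}$ is meaningful, and it is this closure — rather than full complementation relative to $X$ — that lets the downward-closure of $\mathbb{I}$ be applied at each step of the idempotence argument above. I would record this closure once at the outset and treat it as standing throughout, which is what makes the IASD operators the ``better behaved'' variant and allows them to be defined on subcollections of the powerset without reference to the ambient complement.
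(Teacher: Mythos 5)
Your proposal is correct and takes essentially the same route as the paper, whose entire proof is the remark that the argument of the IAD theorem carries over with relative complementation ($<a>\setminus A$ in place of $<a>\cap A^c$) sufficing. Your careful handling of lower idempotence --- using the proposition that $x \in \,<a>$ implies $<x>\, \subseteq \,<a>$ to force $<a>\setminus A^{l_+} = \,<a>\setminus A$ --- is precisely the step the paper's IAD proof compresses into ``the converse also holds because of the definition of $<a>$,'' so you have filled in, rather than deviated from, the intended argument.
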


\begin{proof}
 The proof is similar to that of the previous theorem. 
 Relative complementation suffices.
\qed
\end{proof}

\begin{remark}
The main advantages of the generalization are that knowledge of complementation is not required in construction of the IASD approximations, a potentially restricted collection of ideals is usable in the definition of approximations and this in turn improves computational efficiency.
\end{remark}

\section{Co-Granular Operator Spaces By Ideals}

Given a binary relation on a set it is possible to regard specific subsets of the set as generalized ideals relative to the relation in question. The original motivations for the approach relate to the strategies for generalizing the concept of lattice ideal to partially ordered sets (see \cite{JC1977,SR2015,PVV1971}).  It is also possible to use a binary relation on the power set to form generalized ideals consisting of some subsets of the set. Both approaches are apparently compatible with the methods used for defining approximations by ideals.

In this section the two possibilities are examined and generalizations called \emph{co-granular operator spaces by ideals} and \emph{higher co-granular operator spaces} are proposed.

\begin{definition}\label{fra}
Let $H$ be a set  and $\sigma$ a binary relation on $H$ (that is $\sigma \subseteq H^2$) then 
\begin{itemize}
\item {The \emph{Principal Up-set} generated by $a, b\in H$ shall be the set \[U(a, b) = \{x: \sigma ax \,\&\,\sigma b x \}.\] } 
\item {The \emph{Principal Down-set} generated by $a, b\in H$ shall be the set\[ L(a, b) = \{x: \sigma xa \,\&\,\sigma xb \}. \]}
\item {$B\subseteq H$ is \emph{U-directed} if and only if $(\forall a, b\in B)\, U(a, b)\cap B \neq \emptyset$.}
\item {$B\subseteq H$ is \emph{L-directed} if and only if $(\forall a, b\in B)\, L(a, b)\cap B \neq \emptyset$. If $B$ is both U- and L-directed, then it is $\sigma$-\emph{directed}.}
\item {$K\subset H$ is a $\sigma$-\emph{ideal} if and only if 
\begin{align}
(\forall x\in H)(\forall a\in K)(\sigma xa \longrightarrow x\in K)\\
(\forall a, b\in K)\, U(a, b) \cap K \neq \emptyset 
\end{align}}
\item {$F\subset H$ is a $\sigma$-\emph{filter} if and only if 
\begin{align}
(\forall x\in H)(\forall a\in F)(\sigma ax \longrightarrow x\in F)\\
(\forall a, b\in F)\, L(a, b) \cap F \neq \emptyset 
\end{align}}
\item {The set of $\sigma$-ideals and $\sigma$-filters will respectively be denoted by $\mathcal{I}(H)$ and $\mathcal{F}(H)$ respectively. These are all partially ordered by the set inclusion order. If the intersection of all \ideal s containing a subset $B\subset H$ is an \ideal , then it will be called the {\ideal}  generated by $B$ and denoted by $\langle B \rangle$. The collection of all principal \ideal s will be denoted by $\mathcal{I}_p (H)$. If $\langle x \rangle$ exists for every $x\in H$, then $H$ is said to be $\sigma$-\emph{principal} (principal for short). }
\item {A {\ideal}  $K$ will be said to be \emph{prime} if and only if 
\[(\forall a, b\in H)(L(a, b) \cap K \neq \emptyset \longrightarrow a\in K \text{ or } b\in K).\] The dual concept for filters can also be defined.}
\item {A subset $B\subseteq H$ will be said to be $\sigma$-\emph{convex} if and only if 
\[(\forall a, b\in B)(\forall x\in H)(\sigma ax \,\&\,\sigma x b \longrightarrow x\in B )\] }
\end{itemize}
\end{definition}

\begin{proposition}
All of the following hold in the context of the above definition:
\begin{itemize}
\item {All {\ideal s} are $\sigma$-convex and U-directed.}
\item {If $H$ is $\sigma$-directed, then all \ideal s are $\sigma$-directed subsets.}
\item {Every {\ideal} is contained in a maximal {\ideal}.}
\item {If $H$ is L-directed, $K$ is a prime ideal and for $K_1, K_2 \in \mathcal{I}$ if $K_1\cap K_2 \subseteq K$, then $K_1 \subseteq K_2$ or $K_2 \subseteq K_1$.}
\item {If $\langle a \rangle,\, \langle b\rangle \in \mathcal{I}_p (H) $ and $\tau(\sigma)ab$ ($\tau(\sigma)$ being the transitive completion of $\sigma$), then $\langle a \rangle\,\subseteq \, \langle b\rangle$.}
\end{itemize}
\end{proposition}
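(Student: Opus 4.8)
The plan is to read the first two items almost directly off the defining clauses of a \ideal, and to obtain the remaining three from the downward-closure clause together with one structural hypothesis apiece. U-directedness is literally the second clause in the definition of a \ideal, so the first item reduces to $\sigma$-convexity: given $a,b\in K$ and $x\in H$ with $\sigma ax$ and $\sigma xb$, the downward-closure clause $(\forall x)(\forall c\in K)(\sigma xc\to x\in K)$ applied with $c=b$ yields $x\in K$ immediately (note that neither $a\in K$ nor $\sigma ax$ is actually used). For the second item, $\sigma$-directedness of $H$ gives L-directedness, and since $L(a,b)\subseteq H$ this means $L(a,b)\neq\emptyset$ for all $a,b$; for $a,b\in K$ I would pick $z\in L(a,b)$, so that $\sigma za$ and $\sigma zb$, and downward closure (via $a\in K$, $\sigma za$) puts $z\in K$, witnessing $L(a,b)\cap K\neq\emptyset$. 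With U-directedness already in hand, $K$ is then $\sigma$-directed.

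For the third item I would apply Zorn's Lemma to $\{L\in\mathcal{I}(H):K\subseteq L\}$ ordered by inclusion. The only nontrivial point is that the union $M=\bigcup_i L_i$ of a chain of \ideal s is again a \ideal: downward closure is clear since each $a\in M$ lies in some $L_i$, and for U-directedness one uses that a chain is totally ordered, so any $a,b\in M$ sit in a common $L_j$, whence $U(a,b)\cap L_j\neq\emptyset$. Zorn then delivers a maximal \ideal above $K$. If the relation $\subset$ in Def.~\ref{fra} is read as strict, the same argument is run on maximal proper \ideal s, after the extra check that the chain union stays proper.

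For the fourth item I would argue by contraposition, and in fact prove the conclusion $K_1\subseteq K$ or $K_2\subseteq K$ --- the meet-irreducibility that primeness is designed to supply, the displayed alternative $K_1\subseteq K_2$ or $K_2\subseteq K_1$ looking like a slip. Assuming $K_1\not\subseteq K$ and $K_2\not\subseteq K$, fix $a\in K_1\setminus K$ and $b\in K_2\setminus K$. L-directedness of $H$ produces some $z\in L(a,b)$; then $\sigma za$ and $\sigma zb$ force $z\in K_1$ and $z\in K_2$ by downward closure, so $z\in K_1\cap K_2\subseteq K$ and hence $L(a,b)\cap K\neq\emptyset$. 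Primeness of $K$ then gives $a\in K$ or $b\in K$, the desired contradiction.

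The fifth item I would reduce to showing $a\in\langle b\rangle$, since then $\langle a\rangle\subseteq\langle b\rangle$ follows at once from $\langle a\rangle$ being the least \ideal containing $a$ and $\langle b\rangle$ being a \ideal containing $a$. Writing $\tau(\sigma)ab$ as a finite chain $a=x_0,x_1,\ldots,x_n=b$ with $\sigma x_i x_{i+1}$, I would induct downward from $x_n=b\in\langle b\rangle$, each step applying downward closure to $\sigma x_{i-1}x_i$ to descend from $x_i\in\langle b\rangle$ to $x_{i-1}\in\langle b\rangle$, terminating at $a\in\langle b\rangle$. The one care point is aligning the direction of the transitive completion with that of the closure clause; once that is fixed the induction is routine. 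I expect the genuine obstacles to be concentrated in the third item, where the Zorn set-up must preserve chain unions (and, under a strict-inclusion reading, their properness), and in the fourth, where the statement must first be recognised in its intended meet-irreducible form.
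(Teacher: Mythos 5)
The paper itself contains no proof of this proposition: its entire ``proof'' is a one-line deferral to the cited literature (\cite{JC1977}). Your proposal therefore cannot be compared against an in-paper argument --- it supplies what the paper omits --- and judged on its own terms it is largely correct. Item 1 is right: U-directedness is definitional, and $\sigma$-convexity follows from the downward-closure clause alone (via $\sigma xb$ and $b\in K$; as you note, $a\in K$ and $\sigma ax$ are never used). Item 2 is right: L-directedness of $H$ produces a witness $z\in L(a,b)$, which downward closure pulls into $K$. Item 5 is right: the descent along a finite $\sigma$-chain from $b$ to $a$ is aligned with the direction of the closure clause, and the hypothesis $\langle a\rangle,\langle b\rangle\in\mathcal{I}_p(H)$ makes $\langle a\rangle$ the least $\sigma$-ideal containing $a$, so proving $a\in\langle b\rangle$ does suffice. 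Your reading of item 4 is also the correct call: the displayed conclusion ``$K_1\subseteq K_2$ or $K_2\subseteq K_1$'' asserts a comparability that the hypotheses cannot deliver (for instance, in $\wp(\{1,2\})$ ordered by inclusion, with $K=\wp(\{1,2\})$ --- an ideal, and trivially prime, once improper ideals are admitted --- the incomparable ideals $K_1=\{\emptyset,\{1\}\}$ and $K_2=\{\emptyset,\{2\}\}$ satisfy $K_1\cap K_2\subseteq K$); your contrapositive argument correctly proves the intended meet-irreducibility form $K_1\subseteq K$ or $K_2\subseteq K$.

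The one genuine defect is your parenthetical repair of item 3 under the strict reading of $\subset$ in Def.~\ref{fra}. You claim the Zorn argument can be rerun on proper $\sigma$-ideals ``after the extra check that the chain union stays proper,'' but no such check is available in general: unlike the lattice case, there is no unit element whose absorption into the union would force some member of the chain to be improper, and the union of a chain of proper $\sigma$-ideals can exhaust $H$. Concretely, take $H=\mathbb{N}$ with $\sigma$ the usual order $\leq$. Then $H$ is $\sigma$-directed, the proper $\sigma$-ideals are exactly $\emptyset$ and the finite initial segments $\{0,1,\dots,n\}$, each is strictly contained in the next, and no maximal proper $\sigma$-ideal exists --- so under the proper-inclusion reading the statement is simply false, not merely in need of extra verification. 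Since this paper elsewhere uses $\subset$ for proper inclusion (e.g.\ $\pp=\subset$), you should say explicitly that your Zorn proof establishes the claim only when the $\subset$ in the definition of $\sigma$-ideal is read inclusively (so that $H$ itself is a $\sigma$-ideal whenever it is U-directed), and that the strict reading demands additional hypotheses on $(H,\sigma)$.
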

\begin{proof}
The proofs are not too complex and may be found in \cite{JC1977}  
\end{proof}

\begin{proposition}
Neighborhoods generated by points relate to bound operators according to $(\forall x) [x]_\sigma = L_\sigma (x, x) \;\&\; [x]^\sigma = U_\sigma (x, x)$.
\end{proposition}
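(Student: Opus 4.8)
The plan is to prove both set equalities by a direct unfolding of the relevant definitions, since the statement is a purely definitional identity. First I would recall that, for the relation $\sigma$ on $H$, the successor neighborhood is $[x]_\sigma = \{a \in H : \sigma ax\}$ and the predecessor neighborhood is $[x]^\sigma = \{a \in H : \sigma xa\}$, while the principal down-set and up-set of Definition~\ref{fra} are $L(a,b) = \{y : \sigma ya \,\&\, \sigma yb\}$ and $U(a,b) = \{y : \sigma ay \,\&\, \sigma by\}$; the subscript $\sigma$ appearing in the statement merely records the ambient relation in which these sets are formed.

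For the first equality I would specialize $L$ at $a = b = x$, obtaining $L_\sigma(x,x) = \{y : \sigma yx \,\&\, \sigma yx\}$. The two conjuncts coincide, so idempotence of conjunction collapses the defining condition to $\sigma yx$, giving $L_\sigma(x,x) = \{y : \sigma yx\}$, which is exactly $[x]_\sigma$. The second equality is symmetric: specializing $U$ at $a = b = x$ yields $U_\sigma(x,x) = \{y : \sigma xy \,\&\, \sigma xy\} = \{y : \sigma xy\} = [x]^\sigma$. Both claims then follow by set extensionality, and the universal quantifier over $x$ is discharged at once since the argument invokes no property of the particular $x$.

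The only point requiring care --- and the closest thing to an obstacle in an otherwise immediate verification --- is the bookkeeping of direction. One must match the successor neighborhood (defined by $\sigma ax$, with $\sigma$ pointing \emph{into} $x$) against the down-set $L$ (whose defining clauses also have $\sigma$ pointing into the fixed arguments), rather than against $U$, and dually pair the predecessor neighborhood with $U$. Once the orientation of $\sigma$ in each of the four definitions is tracked correctly, no further computation is needed.
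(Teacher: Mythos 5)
Your proof is correct and is exactly the intended argument: the paper states this proposition without proof, treating it as an immediate definitional identity, and your unfolding of $[x]_\sigma$, $[x]^\sigma$, $L(x,x)$, $U(x,x)$ with the conjunction collapsing at $a=b=x$ is precisely that verification. Your care about matching the successor neighborhood (condition $\sigma ax$) with $L$ and the predecessor neighborhood (condition $\sigma xa$) with $U$ is the one genuinely non-trivial bookkeeping point, and you resolve it correctly.
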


\begin{remark}
The connection between the two is relevant when $\sigma$ is used for generating ideals and also for the neighborhoods. There are no instances of such usage in the literature as of this writing and is an open area for further investigation. 
\end{remark}

\begin{definition}
In the context of Def.\ref{fra}, $\sigma$ will be said to be \emph{supremal} if and only if 
\begin{equation}
(\forall a, b\in H)(\exists !^{>0} s(a, b) \in U(a, b))(x\in U(a, b)\longrightarrow s(a, b) = x \text{ or } \sigma s(a, b) x) 
\end{equation}
Elements of the form $s(a, b)$ are $\sigma$-\emph{supremums} of $a$ and $b$. 
\end{definition}

\begin{theorem}
All of the following hold:
\begin{itemize}
\item {Anti symmetrical relations are uniquely supremal.}
\item {\ideal s are closed under supremal relations.}
\item {If $\sigma$ is supremal then $\langle K \rangle $ exists for all nonempty subsets $K\subseteq H$ in $(H, \sigma)$ and is principal.}
\item {If $\sigma$ is supremal and $(\mathcal{I}(H), \subseteq)$ has a least element, then it is an algebraic lattice and the finitely generated \ideal s are its compact elements. }
\item {If $\sigma$ is supremal, let $\mathfrak{L, \lambda,} \pi, \Sigma: \wp(H)\setminus \{\emptyset\} \longmapsto \wp (H)$ be maps such that for any $\emptyset \neq X \subseteq H$, 
\begin{align}
\mathfrak{L}(X) = \{x \in H; \exists a\in X\, \sigma x a\} \text{ and } \lambda (X) = \mathfrak{L}(X) \cup X\\ 
\pi (X) = \{a \in H ; (\exists b, c \in X)\, a= s(b, c)\} \text{ and } \Sigma (X) = \pi(X) \cup X,
\end{align} then $\langle X \rangle= \bigcup_1^\infty (\Sigma\lambda)^n (X)$. If $\sigma$ is also reflexive, then $\langle X \rangle= \bigcup_1^\infty (\pi\mathfrak{L})^n (X)$. }
\item {If $(\forall a, b)\, \sigma a b \text{ or } \sigma b a$, then $(H, \sigma )$ is principal, $(\forall a \in H) \langle a \rangle = \{x: \, \tau(\sigma) x a \}$ and $(\mathcal{I}(H), \subseteq)$ is a chain.}
\item {$S$ is principal and for each $a\in S$, $\langle a \rangle = \{b ; \, \sigma ba \}$ if and only if $\sigma$ is a quasi order.}
\end{itemize}
\end{theorem}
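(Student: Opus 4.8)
The plan is to prove the seven assertions in dependency order, isolating the one fact that powers all the supremal-case items: that \ideal s are closed under the supremum operation $s(\cdot,\cdot)$. I would first dispose of the two unconditional claims. For the uniqueness claim, suppose $s_1,s_2\in U(a,b)$ both satisfy the minimality clause of the definition of supremal; feeding $x=s_2$ into the clause for $s_1$ and $x=s_1$ into the clause for $s_2$ gives $\sigma s_1 s_2$ and $\sigma s_2 s_1$ whenever $s_1\neq s_2$, and antisymmetry then collapses them. Hence a least element of $U(a,b)$ is automatically unique, so antisymmetric relations meet the uniqueness demand for free. For the closure lemma, let $K$ be a \ideal{} with $a,b\in K$; U-directedness yields some $c\in U(a,b)\cap K$, and the minimality of $s(a,b)$ gives either $s(a,b)=c$ or $\sigma\,s(a,b)\,c$, the latter forcing $s(a,b)\in K$ by downward closure applied to $c\in K$. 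Thus $s(a,b)\in K$, which is the workhorse for everything below.

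Using the closure lemma I would show that the family of ideals containing a fixed nonempty $K$ is closed under intersection: such an intersection is trivially downward closed, and if $a,b$ lie in every member then $s(a,b)\in U(a,b)$ lies in every member, so the intersection is U-directed and hence a \ideal. Therefore $\langle K\rangle$ exists for all nonempty $K$, and, specializing to singletons, $(H,\sigma)$ is principal. Closure under intersection together with the assumed least element then makes $(\mathcal{I}(H),\subseteq)$ a complete lattice, with meet $\bigcap$ and join $A\vee B=\langle A\cup B\rangle$. Algebraicity I would derive from the finitary nature of $s$: since the only generator-adding step uses pairs (made explicit by the formula below), every element of $\langle K\rangle$ already lies in $\langle F\rangle$ for some finite $F\subseteq K$, so $\langle\cdot\rangle$ is an algebraic closure operator whose compact elements are exactly the finitely generated ideals, each ideal being their directed join.

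For the explicit formula, set $Y=\bigcup_{n\ge 1}(\Sigma\lambda)^n(X)$. I would check that $Y$ is a \ideal{} containing $X$: each iterate contains $X$; $Y$ is downward closed because a $\sigma$-predecessor of a point of $(\Sigma\lambda)^n(X)$ is swept into $\mathfrak{L}$ and hence into $(\Sigma\lambda)^{n+1}(X)$; and $Y$ is U-directed because any $a,b\in Y$ sit in a common iterate (the iterates ascend), so $s(a,b)\in\pi(\cdot)\subseteq(\Sigma\lambda)^{n+1}(X)$. Minimality is an induction: any \ideal{} $J\supseteq X$ satisfies $\mathfrak{L}(J)\subseteq J$ by downward closure and $\pi(J)\subseteq J$ by the closure lemma, so $(\Sigma\lambda)^n(X)\subseteq J$ for all $n$, giving $Y=\langle X\rangle$. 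For the reflexive refinement, taking $a=b$ shows $a$ is itself the least element of $U(a,a)$, so $s(a,a)=a$ by uniqueness; thus $\pi(S)\supseteq S$ and $\mathfrak{L}(S)\supseteq S$, collapsing $\Sigma\lambda$ to $\pi\mathfrak{L}$ and yielding the second formula.

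Finally, the connex hypothesis $\sigma ab$ or $\sigma ba$ for all $a,b$, read at $a=b$, already forces reflexivity. Two ideals are then comparable: a point in one but not the other, confronted by connexity with a point in the other but not the first, contradicts downward closure either way, so $(\mathcal{I}(H),\subseteq)$ is a chain. For $\langle a\rangle=\{x:\tau(\sigma)xa\}$, the right-hand set is downward closed by transitivity of $\tau(\sigma)$, and for $b,c$ in it reflexivity places whichever of $b,c$ is the $\sigma$-successor into $U(b,c)$ and into the set, so it is a \ideal; iterated downward closure forces it into every ideal containing $a$, so it equals $\langle a\rangle$. For the quasi-order biconditional, if $\sigma$ is reflexive and transitive then $\{b:\sigma ba\}$ is a \ideal{} with $a$ as a common upper bound, giving the stated principal ideals; conversely $a\in\langle a\rangle=\{b:\sigma ba\}$ yields reflexivity, while $\sigma ab$ and $\sigma bc$ give $b\in\langle c\rangle$, whence downward closure of $\langle c\rangle$ forces $a\in\langle c\rangle$, i.e.\ $\sigma ac$, giving transitivity. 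I expect the main obstacle to be the algebraicity claim: establishing that $\langle\cdot\rangle$ is finitary and that compact equals finitely generated genuinely needs the formula, so I would prove that item first and use it, while taking care over the edge cases the definitions leave implicit (properness of ideals, existence of the least element, and possibly empty $U(a,b)$).
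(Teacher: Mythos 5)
The paper gives no proof of this theorem at all: it is stated bare, the neighbouring proposition is delegated to \cite{JC1977}, and the text moves straight on to the remark that ``very few assumptions on $\sigma$ suffice.'' So there is no internal argument to compare yours against, and your proposal has to stand on its own --- which it essentially does. You have isolated the correct keystone (any valid supremum $s(a,b)$ of two elements of a $\sigma$-ideal lies in that ideal, by U-directedness plus downward closure) and the correct dependency order: closure lemma, then intersection-closure and existence of $\langle K\rangle$, then the iterative formula, then algebraicity via the finitary character of the closure operator (each element of $\langle X\rangle$ enters at a finite stage of $(\Sigma\lambda)^n$, hence lies in $\langle F\rangle$ for finite $F\subseteq X$). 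Your disambiguations of the loosely worded bullets --- uniqueness-given-existence in the first, and ``is principal'' in the third as a statement that $(H,\sigma)$ is principal --- are the only readings under which the claims are true, and your arguments for the connexity bullet (reflexivity from $a=b$, the chain argument, $\langle a\rangle=\{x:\tau(\sigma)xa\}$ by iterated downward closure) and for both directions of the quasi-order biconditional are correct.

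Two repairs are needed. First, in the reflexive refinement you argue $s(a,a)=a$ ``by uniqueness,'' but uniqueness is exactly what you lack without antisymmetry --- that is the content of your own first bullet. The correct and sufficient statement is that $a$ \emph{qualifies} as a supremum of $(a,a)$: every $x\in U(a,a)$ satisfies $\sigma a x$ by the very definition of $U(a,a)$, so $a\in\pi(Z)$ whenever $a\in Z$, \emph{provided} $\pi$ is read as collecting all valid suprema rather than the values of a fixed choice function; with a fixed choice $s(a,a)\neq a$ the $\pi\mathfrak{L}$ formula can genuinely fail to contain $X$, so this reading must be made explicit. Second, your intersection argument for the third bullet silently needs at least one $\sigma$-ideal containing $K$; since the definition writes $K\subset H$, under a strict reading $H$ itself is unavailable and the bullet is simply false (take $H=\mathbb{N}$, $\sigma$ the order $\leq$, $K$ the even numbers: the downward closure of $K$ is all of $\mathbb{N}$, so no proper ideal contains $K$). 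You flag ``properness'' as an implicit edge case, which is the right instinct, but it must be resolved, not just flagged: allow improper ideals, and note that $H$ is automatically U-directed when $\sigma$ is supremal. Finally, a strengthening worth extracting from your closure lemma: $\langle\{a,b\}\rangle=\langle s(a,b)\rangle$, whence by induction every finitely generated $\sigma$-ideal is principal --- very likely the intended content of ``and is principal,'' and it sharpens your identification of the compact elements in the fourth bullet.
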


The above results mean that very few assumptions on $\sigma$ suffice for reasonable properties on $\mathcal{I}(H)$. 

\begin{definition}
By a neighborhood granulation $\mathcal{G}$ on a set $S$ will be meant a subset of the power set $\wp(S)$ for which there exists a map $\gamma : S\longmapsto \mathcal{G} $ such that 
\begin{align}
(\forall B\in \mathcal{G})(\exists x \in S) \, \gamma (x) = B \tag{Surjectivity}\\
\bigcup_{x\in S} \gamma (x) = S   \tag{Cover}
\end{align}
\end{definition}

\begin{definition}
By a \emph{Co-Granular Operator Space By Ideals} \textsf{GOSI} will be meant a structure of the form $S\,=\, \left\langle \underline{S}, \sigma, \mathcal{G}, l_* , u_*\right\rangle$ with $\underline{S}$ being a set, $\sigma$ being a binary relation on $S$, $\mathcal{G}$ a \emph{neighborhood granulation} over $S$ and $l_*, u_*$ being \emph{*-lower and *-upper approximation operators} $:\wp(\underline{S})\longmapsto \wp(\underline{S})$ ($\wp(\underline{S})$ denotes the power set of $\underline{S}$) defined as below ($\underline{S}$ is replaced with $S$ if clear from the context. \textsf{Lower and upper case alphabets may denote subsets} ):

\begin{align*}
(\forall X\in \wp(S))\, X^{l_*} = \{a : a \in X\, \&\, \gamma(a) \cap X^c \in \mathcal{I}_\sigma (S) \} \tag{*-Lower}\\
(\forall X\in \wp(S))\, X^{u_*} = \{a : a \in S\, \&\,\gamma(a)\cap X \notin \mathcal{I}_\sigma (S)\} \cup X \tag{*-Upper}\\
\end{align*}
 
In general, if rough approximations are defined by expressions of the form $X^{\oplus} = \{a: \gamma(a)\odot X^*\in \mathcal{J}   \} $ with $\oplus\in \{l, u\}$, $\mathcal{G}\subset \wp(S)$, $\gamma: S \longmapsto \mathcal{G}$ being a map, $* \in \{c, 1\}$ and $\odot \in \{\cap , \cup\}$ , then the approximation will be said to be \emph{co-granular}. 
\end{definition}
The definition of co-granularity can be improved/generalized in a first order language with quantifiers.
\begin{definition}
By a \emph{Higher Co-Granular Operator Space By Ideals} \textsf{GOSIH} will be meant a structure of the form $S\,=\, \left\langle \underline{S}, \sigma, \mathcal{G}, l_o , u_o \right\rangle$ with $\underline{S}$ being a set, $\sigma$ \emph{being a binary relation on the powerset} $\wp(S)$, $\mathcal{G}$ a \emph{neighborhood granulation} over $S$ and $l_o, u_o$ \emph{o-lower and o-upper approximation operators} $:\wp(\underline{S})\longmapsto \wp(\underline{S})$ ($\wp(\underline{S})$ denotes the power set of $\underline{S}$) defined by the following conditions($\underline{S}$ is replaced with $S$ if clear from the context. \textsf{Lower and upper case alphabets may denote subsets} ):
\begin{align*}
\text{For a fixed } \mathbb{I} \in \mathcal{I}_\sigma (\wp(S)) \tag{Ideal}\\
(\forall X\in \wp(S))\, X^{l_o} = \{a : a \in X\, \&\, \gamma(a) \cap X^c \in \mathbb{I} \} \tag{o-Lower}\\
(\forall X\in \wp(S))\, X^{u_o} = \{a : a \in S\, \&\,\gamma(a)\cap X \notin \mathbb{I}\} \cup X \tag{o-Upper}\\
\end{align*}
\end{definition}

\begin{theorem}
All of the following hold in a GOSI $S$:
\begin{align}
(\forall A \in \wp(S))\, A^{l_*} \subseteq A \subseteq A^{u_*} ) \tag{Inclusion}\\
(\forall A \in \wp(S))\, A^{l_* l_*} \subseteq A^{l_*}  \tag{l-Weak Idempotency}\\
(\forall A \in \wp(S))\, A^{u_*} \subseteq A^{u_* u_*}  \tag{u-Weak Idempotency}\\
\emptyset^{l_*} = \emptyset = \emptyset^{u_*}  \tag{Bottom}\\
S^{l_*} = S = S^{u_*}   \tag{Top} 
\end{align}
\end{theorem}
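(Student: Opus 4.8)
The plan is to read almost everything directly off the defining clauses of $l_*$ and $u_*$, reserving a single structural lemma for the boundary equations. I would begin with Inclusion. The set defining $A^{l_*}$ carries the conjunct $a\in A$, so $A^{l_*}\subseteq A$ holds by inspection; and the set defining $A^{u_*}$ is formed by taking a union with $A$ itself, so $A\subseteq A^{u_*}$ is equally immediate. Neither containment uses any property of $\sigma$, of $\gamma$, or of $\mathcal{I}_\sigma(S)$; both are forced by the shape of the definitions.

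Both Weak Idempotency statements then follow from Inclusion with no additional work, since the operators act on arbitrary subsets. Because $B^{l_*}\subseteq B$ for every $B\in\wp(S)$, instantiating $B:=A^{l_*}$ gives $A^{l_*l_*}\subseteq A^{l_*}$; dually, because $B\subseteq B^{u_*}$ for every $B$, instantiating $B:=A^{u_*}$ gives $A^{u_*}\subseteq A^{u_*u_*}$. I would present these as one-line consequences rather than unfold the membership conditions a second time.

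For Bottom and Top the key step is the lemma that $\emptyset\in\mathcal{I}_\sigma(S)$, i.e. that the empty set is a $\sigma$-ideal. This is immediate from Definition~\ref{fra}: both defining conditions of a $\sigma$-ideal quantify over its members and so hold vacuously for $\emptyset$, provided $S\neq\emptyset$ so that $\emptyset$ is a proper subset of $S$. Granting the lemma, $\emptyset^{l_*}=\emptyset$ because its defining set requires $a\in\emptyset$; and $\emptyset^{u_*}=\emptyset$ because $\gamma(a)\cap\emptyset=\emptyset\in\mathcal{I}_\sigma(S)$ makes the test $\gamma(a)\cap\emptyset\notin\mathcal{I}_\sigma(S)$ fail for every $a$, while the trailing union is with $\emptyset$. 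Symmetrically, $S^{l_*}=S$ since $S^c=\emptyset$ forces $\gamma(a)\cap S^c=\emptyset\in\mathcal{I}_\sigma(S)$ for every $a\in S$, and $S^{u_*}=S$ since the defining set sits inside $S$ and is then unioned with $S$.

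The only genuine obstacle is conventional rather than computational: the lemma $\emptyset\in\mathcal{I}_\sigma(S)$ depends on reading the clause $K\subset H$ in Definition~\ref{fra} as permitting the empty set (and on $S$ being nonempty). If instead $\sigma$-ideals were required to be nonempty, then $\emptyset^{u_*}=\emptyset$ and $S^{l_*}=S$ would each need a separate justification, so I would make this dependence explicit at the point where the lemma is invoked rather than let it pass silently.
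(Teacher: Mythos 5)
Your proof is correct and is essentially the argument the paper intends: the paper omits the details, remarking only that the proof is ``direct,'' and your verification (Inclusion read off the definitional shape, both Weak Idempotency claims obtained by instantiating Inclusion at $A^{l_*}$ and $A^{u_*}$, and Bottom/Top reduced to the lemma that $\emptyset$ is a $\sigma$-ideal) is exactly that direct argument. Your explicit flagging of the convention $\emptyset\in\mathcal{I}_\sigma(S)$ is also consistent with the paper's own usage, which treats $\emptyset$ as a trivial ideal in Section \ref{sei} and speaks of the ``nontrivial'' $\sigma$-ideals in its abstract example.
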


\begin{remark}
The proof of the above theorem is direct. Monotonicity of the approximations need not hold in general. 
This is because the choice of parthood is not \emph{sufficiently coherent} with $\sigma$ in general. A sufficient condition can be that $\sigma$-ideals be generated by $\sigma$ that are at least quasi orders.
\end{remark}

\begin{proposition}
In a GOSI $S$ all of the following hold:
\begin{itemize}
\item {The granulation is not admissible}
\item {The approximations $l_*, u_*$ are not granular}
\end{itemize}
\end{proposition}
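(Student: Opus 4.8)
Neither clause can hold for \emph{every} GOSI: for the trivial neighbourhood granulation $\gamma(x)=\{x\}$ one checks directly that $A^{l_*}=A=A^{u_*}$ for all $A$, and these identity operators are monotone, idempotent and carried by an admissible granulation. So the statement must be read as ``in general'', and the plan is to settle both clauses at once by producing a single small GOSI that simultaneously violates an admissibility axiom and monotonicity. Concretely I take the ground set $S=\{1,2,3,4\}$, the relation $\sigma=\{(1,1),(2,2),(3,3),(4,4),(1,3),(2,3)\}$ (so $1$ and $2$ sit $\sigma$-below $3$, while $4$ is isolated), and the cover given by $\gamma(1)=\{1\}$, $\gamma(2)=\{2\}$, $\gamma(3)=\{3,4\}$, $\gamma(4)=\{1,2,3\}$, for which surjectivity onto $\mathcal{G}=\{\{1\},\{2\},\{3,4\},\{1,2,3\}\}$ and the cover condition are immediate.

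\textbf{Failure of admissibility.} Admissibility (Def.\,\ref{gos}) requires WRA, LS and FU together, so refuting one suffices; I target Lower Stability. The one computation that must be done carefully is the list of $\sigma$-ideals: using $U(a,b)=\{x:\sigma ax\,\&\,\sigma bx\}$ from Def.\,\ref{fra}, the set $\{1,2\}$ fails $U$-directedness (its unique common upper bound $3$ is not in it) while $\{1,2,3\}$ is both $\sigma$-closed and $U$-directed, so $\mathcal{I}_\sigma(S)=\{\emptyset,\{1\},\{2\},\{4\},\{1,2,3\}\}$; the decisive point is that this family is \emph{not} closed under taking subsets. Since $a\in A^{l_*}$ means ``$a\in A$ and $\gamma(a)\cap A^c\in\mathcal{I}_\sigma(S)$'', this ideal-membership clause is not inherited from an inclusion $\gamma(x)\subseteq A$. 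Taking the granule $\gamma(3)=\{3,4\}$ and $A=\{3,4\}$ gives $\gamma(3)\subseteq A$, yet $\gamma(4)\cap A^c=\{1,2,3\}\cap\{1,2\}=\{1,2\}\notin\mathcal{I}_\sigma(S)$, whence $4\notin A^{l_*}$ and $\gamma(3)\not\subseteq A^{l_*}$. Thus LS fails and the granulation is not admissible.

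\textbf{Failure of granularity.} The lever here is that the operators of any granular operator space satisfy monotonicity as a defining axiom (Def.\,\ref{gos}), and the textbook granular operators (unions of granules) are monotone; hence a non-monotone operator pair cannot be granular. The plan is to exhibit, in the same example, the non-monotonicity already announced in the remark preceding the proposition. For $A=\{4\}\subseteq B=\{3,4\}$ one has $\gamma(4)\cap A^c=\{1,2,3\}\in\mathcal{I}_\sigma(S)$ but $\gamma(4)\cap B^c=\{1,2\}\notin\mathcal{I}_\sigma(S)$, so $4\in A^{l_*}$ while $4\notin B^{l_*}$, i.e.\ $A^{l_*}\not\subseteq B^{l_*}$; an entirely analogous choice (e.g.\ $A=\{1,2\}\subseteq B=\{1,2,3\}$, where $4\in A^{u_*}\setminus B^{u_*}$) breaks monotonicity of $u_*$. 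Both failures are again powered by the non-heredity of $\sigma$-ideals. As $l_*$ and $u_*$ are therefore not monotone, they are the operators of no granular operator space, so they are not granular; this is moreover consistent with the first clause, since granularity would in any case presuppose an admissible granulation.

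\textbf{Expected main obstacle.} Two points need care. First, ``granular'' is only semi-formally defined in the paper, and one must commit to a usable reading: I rely on the monotonicity characterisation rather than on representability, because in the very example above $A^{l_*}$ does happen to be a term operation of granules (e.g.\ $\{3\}=\{1,2,3\}\cap\{3,4\}$), so a naive ``expressible from granules'' reading would fail to refute granularity. Second, the computation of $\mathcal{I}_\sigma(S)$ is the load-bearing step, since the entire argument turns on $\sigma$-ideals not being downward closed as sets; I would verify $\sigma$-closure and $U$-directedness case by case and confirm that $\gamma$ satisfies the surjectivity and cover axioms so that the structure is a legitimate GOSI.
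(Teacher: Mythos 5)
Your proposal is correct in all its computations, but it takes a genuinely different route from the paper. The paper's entire proof is a one-line assertion: the co-granular approximations ``are not representable in terms of granules using set operations alone on $\wp(S)$, so weak representability (WRA) fails'' --- a single representability claim meant to dispose of both clauses at once, since admissibility requires WRA and ``granular'' is there read as ``weakly representable''. You instead read the proposition existentially (correctly observing that the literal universal reading is false: with $\gamma(x)=\{x\}$ on a finite set the approximations are the identity and the singleton granulation is admissible), build an explicit four-element GOSI, refute admissibility by breaking Lower Stability, and refute granularity by breaking monotonicity, which no operator pair of a granular operator space can violate. I verified your enumeration $\mathcal{I}_\sigma(S)=\{\emptyset,\{1\},\{2\},\{4\},\{1,2,3\}\}$, the LS failure at $\gamma(3)=\{3,4\}$ with $A=\{3,4\}$, and both monotonicity failures ($4\in\{4\}^{l_*}\setminus\{3,4\}^{l_*}$ and $4\in\{1,2\}^{u_*}\setminus\{1,2,3\}^{u_*}$); all are correct. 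What your route buys is rigour and a checkable witness; it also exposes that the paper's argument cannot be right as stated, since in your example the granules generate all of $\wp(S)$ under $\cup,\cap,{}^c$, so WRA in its formal per-set form actually holds there, and ``WRA clearly fails'' is therefore not a property of every GOSI. What the paper's route would buy, were it substantiated, is uniformity over all GOSIs and an exact match with its own formalization of ``granular''. The one caveat is that your second clause proves ``$l_*,u_*$ are not the operators of any granular operator space'', not ``$l_*,u_*$ are not weakly representable'': under the paper's representability reading your particular example cannot witness the second clause at all, and you would instead need a GOSI whose granules generate a proper subalgebra of $\wp(S)$ omitting some $A^{l_*}$ (easy to arrange with two-block granulations and a suitable $\sigma$). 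You flag this interpretive choice explicitly and justify it, which is the right thing to do; just be aware that you and the paper are refuting ``granular'' in two different formal senses, and a fully robust write-up would include both kinds of counterexample.
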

\begin{proof}
It is clear that both the lower and upper co-granular approximations are not representable in terms of granules using set operations alone on $\wp(S)$. So weak representability (WRA) fails.
\qed 
\end{proof}

The properties of approximations in a GOSIH depend on those of the ideal and the granulation operator $\gamma$. This is reflected in the next theorem:

\begin{theorem}
In a GOSIH $S$ satisfying
\begin{itemize}
\item {$\sigma$ is supremal,}
\item {$\sigma$ is a quasi order and}
\item {$(\forall a) a\in \gamma (a)$}
\end{itemize}
then all of the following hold:
\begin{align}
A^{l_o} \subseteq A \subseteq A^{u_o};\; \emptyset^{u_o}=\emptyset \,;\, X^{l_o} = X \\
A\subset B\longrightarrow A^{l_o}\subseteq B^{l_o} \,\&\,A^{u_o}\subseteq B^{u_o} \\ 
A^{l_o l_o}= A^{l_o} ; \; A^{u_o} \subseteq A^{u_o u_o} \\
(\forall A\in \mathbb{I})\,  A^{u_o} = A ; \;(\forall A^c\in \mathbb{I})\,  A^{l_o} = A
\end{align}
\end{theorem}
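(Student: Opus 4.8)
The plan is to show that, under the three hypotheses, the fixed $\sigma$-ideal $\mathbb{I}\in\mathcal{I}_\sigma(\wp(S))$ behaves as an ordinary lattice ideal of the Boolean lattice $\langle \wp(S), \cup, \cap, {}^c, \emptyset, S\rangle$, i.e. that it is downward closed under $\subseteq$, closed under finite unions, and contains $\emptyset$. Once this is secured, each displayed line follows by transcribing the corresponding step of the IAD theorem, replacing the neighbourhood $<a>$ throughout by the granule $\gamma(a)$ and using $a\in\gamma(a)$ wherever reflexivity of $<\cdot>$ was previously invoked.

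First I would dispose of the lines needing nothing beyond the definitions and $\emptyset\in\mathbb{I}$. The inclusion $A^{l_o}\subseteq A\subseteq A^{u_o}$ is immediate from the shape of the two defining comprehensions, and $A^{u_o}\subseteq A^{u_o u_o}$ is just the instance $Y\subseteq Y^{u_o}$ with $Y=A^{u_o}$. For $\emptyset^{u_o}=\emptyset$ and $X^{l_o}=X$ I would note that $\gamma(a)\cap\emptyset=\emptyset$ and $\gamma(a)\cap X^c=\emptyset$, so both reduce to the single fact $\emptyset\in\mathbb{I}$, which holds once $\subseteq$-downclosure is known, since $\mathbb{I}$ is nonempty and $\emptyset$ is $\subseteq$-least.

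Next, the monotonicity line and the two $\mathbb{I}$-fixed-point lines all reduce to $\subseteq$-downclosure of $\mathbb{I}$: if $A\subseteq B$ then $B^c\subseteq A^c$ and $\gamma(a)\cap A\subseteq\gamma(a)\cap B$, so membership of the smaller set in $\mathbb{I}$ transports exactly as in the earlier proof; likewise $A\in\mathbb{I}$ forces $\gamma(a)\cap A\in\mathbb{I}$ and $A^c\in\mathbb{I}$ forces $\gamma(a)\cap A^c\in\mathbb{I}$, emptying the relevant comprehensions. The only genuinely delicate line is the lower idempotency $A^{l_o l_o}=A^{l_o}$. The inclusion $A^{l_o l_o}\subseteq A^{l_o}$ is free, and for the converse I would argue, as in the $<a>$-proof, that every $b\in\gamma(a)\cap A$ with $a\in A^{l_o}$ already lies in $A^{l_o}$; this needs a \emph{granule-locality} property, namely that $b\in\gamma(a)$ forces $\gamma(b)\cap A^c$ to sit inside $\gamma(a)\cap A^c\in\mathbb{I}$, so that $\gamma(a)\cap(A^{l_o})^c=\gamma(a)\cap A^c$ and the two lower operators collapse.

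The hard part is precisely the two closure facts I have been leaning on, $\subseteq$-downclosure of $\mathbb{I}$ and granule-locality, because the ideal and the order $\sigma$ live on $\wp(S)$ while the approximations are written with the Boolean operations $\cap,{}^c$. Neither follows from ``$\sigma$ a supremal quasi order with $a\in\gamma(a)$'' in isolation; what is really being used, as the remark after the GOSI theorem signals with the phrase \emph{sufficiently coherent}, is that the supremal quasi order $\sigma$ refines Boolean inclusion, so that $X\subseteq Y$ entails $\sigma XY$ and hence, by $\sigma$-downclosure of $\mathbb{I}$, $\subseteq$-downclosure. Here I would invoke the earlier characterisation that, for a quasi order, $\langle a\rangle=\{b:\sigma ba\}$ and that supremality yields $s(a,b)$ as a join, to pin $\mathbb{I}$ down as a bona fide lattice ideal; the granule-locality would then be extracted from the compatibility of $\gamma$ with $\sigma$. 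I expect this coherence bridge, rather than any of the set-algebra manipulations, to be the crux of the argument.
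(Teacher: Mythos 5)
Your reductions are exactly the ones in the paper's own proof, which is explicitly partial (``Some parts are proved below''): it establishes only monotonicity and the two $\mathbb{I}$-fixed-point claims, in each case by the same transport you describe ($\gamma(z)\cap B^c\subseteq\gamma(z)\cap A^c\in\mathbb{I}$, $\gamma(z)\cap A\subseteq A\in\mathbb{I}$, $\gamma(z)\cap A^c\subseteq A^c\in\mathbb{I}$), with the required $\subseteq$-downclosure of $\mathbb{I}$ justified only by the phrase \emph{the $\sigma$-ideals behave reasonably well}. So for the lines the paper actually proves, your argument is the paper's argument made explicit.

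What you add is a correct diagnosis of the two facts the paper never establishes. First, a $\sigma$-ideal of $\wp(S)$ is by definition closed downward under $\sigma$, not under $\subseteq$; since neither the GOSIH axioms nor the three hypotheses ($\sigma$ supremal, $\sigma$ a quasi order, $a\in\gamma(a)$) impose any link between $\sigma$ and Boolean inclusion, $\subseteq$-downclosure of $\mathbb{I}$ (and even $\emptyset\in\mathbb{I}$, on which Bottom and Top rest) does require an unstated coherence axiom such as $X\subseteq Y\longrightarrow \sigma XY$. Second, lower idempotency $A^{l_o l_o}=A^{l_o}$ is precisely one of the parts the paper's proof omits, and you are right that it cannot be recovered from downclosure alone: in Section \ref{sei} the analogous step rested on the locality property $a\in <b>\,\longrightarrow\, <a>\,\subseteq\, <b>$ of the neighbourhoods, and nothing in the GOSIH axioms on $\gamma$ (surjectivity, cover, $a\in\gamma(a)$) yields the analogue $b\in\gamma(a)\longrightarrow \gamma(b)\subseteq\gamma(a)$. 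So the ``coherence bridge'' you flag is not a defect of your proposal relative to the paper; it is a genuine gap in the theorem as stated, which the paper's proof shares but does not acknowledge. Your write-up is as complete as the stated hypotheses permit, and it is more precise than the source about which additional assumptions would actually close the argument.
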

\begin{proof}
Some parts are proved below:
\begin{itemize}
\item {Monotonicity happens because the $\sigma$-ideals behave reasonably well. If $A\subset B$, then for any $z\in A^{l_o}$, it is necessary that $\gamma(z)\cap B^c \subseteq \gamma(z)\cap A^c$. This ensures that $A^{l_o} \subseteq B^{l_o}$. Again for the upper approximation, If $A\subset B$ and $z\in A^{u_o}$, $\gamma(z)\cap A \notin \mathcal{I}$ holds. As then $\gamma(z) \cap A \subseteq \gamma(z)\cap B$, it follows that $\gamma(z)\cap B \notin \mathcal{I}$. This ensures $A^{u_o} \subseteq B^{u_o}$.  }
\item {For proving $(\forall A\in \mathbb{I})\,  A^{u_o} = A$, note that if $z\in A^{u_o}$ then it is necessary that $\gamma (z)\cap A \notin \mathbb{I}$ or $z\in A$. It is not possible that $\gamma (z)\cap A \notin \mathbb{I}$ as $A$ is in $\mathbb{I}$. So $A^{u_o} = A$.}
\item {Again, if $z\in A$ and $\gamma (z)\cap A^c\in \mathbb{I}$ and $A^c\in \mathbb{I}$, then $z\in A$ and $z\in A^{l_o}$. This yields $A= A^{l_o}$. }
\end{itemize} 
\end{proof}

\emph{All this shows that the ideal based approach works due to the properties of the ideals}.
\begin{definition}
For all of the above cases, a natural concept of $A$ being \emph{roughly included} in $B$ ($A\sqsubseteq B$) if and only if $A^l \subseteq B^l$ and $A^u \subseteq B^u$ for relevant choices of $l, u$.   $A$ is roughly equal to $B$ ($A\approx_{l,u} B$) if and only if $A\sqsubseteq B \, \& B\sqsubseteq A$. Quotients of the equivalence $\approx_{l,u}$ will be referred to as rough objects.
\end{definition}

\begin{theorem}
The antichain based semantics of \cite{AM9114,AM6999} applies to all of  GOSI and GOSIH contexts with corresponding concepts of rough equalities.   
\end{theorem}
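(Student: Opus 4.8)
The plan is to show that the construction of the antichain based semantics in \cite{AM9114,AM6999} consumes as input only the derived rough order and rough equality together with the ambient lattice of subsets, and that all of these ingredients are available in the GOSI and GOSIH settings. Concretely, the earlier semantics proceeds by forming the rough objects as the $\approx_{l,u}$-classes, ordering them by the quasi-order $\sqsubseteq$ induced on $\wp(S)$ via $A\sqsubseteq B$ if and only if $A^l\subseteq B^l \,\&\, A^u\subseteq B^u$, and then encoding each rough object by an antichain in the resulting partially ordered set. The first step I would take is to isolate exactly which properties of the operator pair $l, u$ that construction actually uses, and to record that it never appeals to admissibility of the granulation nor to granular representability of the approximations: it uses only the abstract behaviour of the operator pair and the distributive (Boolean) lattice structure of $\wp(S)$.

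Next I would verify that these ingredients are genuinely present in both new frameworks. The definition preceding the theorem already supplies $\sqsubseteq$ and $\approx_{l,u}$ for \emph{all of the above cases}, so passing to the quotient by $\approx_{l_*,u_*}$ in a GOSI and by $\approx_{l_o,u_o}$ in a GOSIH yields the respective rough objects. For GOSI, the operator properties established earlier (Inclusion, $l$- and $u$-Weak Idempotency, Bottom and Top) already force $\sqsubseteq$ to be a quasi-order and $\approx_{l_*,u_*}$ to be an equivalence, so the quotient poset of rough objects exists. For a GOSIH satisfying the supremal, quasi-order and $(\forall a)\,a\in\gamma(a)$ hypotheses, the stronger list (monotonicity, lower idempotency, weak upper idempotency and the ideal-fixing identities) holds, so the situation is even closer to the granular operator space case.

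I would then transport the antichain encoding itself. Since each $\approx_{l,u}$-class is determined by the pair $(A^l, A^u)$, and these pairs live inside the fixed lattice of subsets, the assignment sending a rough object to the antichain of its representative approximations is precisely the map used in \cite{AM9114,AM6999}; the operations on antichains are defined purely order-theoretically on the quotient, and so they descend verbatim. Checking that this assignment is well defined and that it respects the operations, with the rough equalities read off as $\approx_{l_*,u_*}$ and $\approx_{l_o,u_o}$ respectively, is the routine part.

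The hard part will be the GOSI case, where, as the earlier Remark observes, monotonicity of $l_*, u_*$ can fail. I would therefore have to confirm that no step of the earlier semantics tacitly used monotonicity of the operators rather than merely the quasi-order $\sqsubseteq$ — the latter being defined directly through inclusion of approximations and hence always a quasi-order, regardless of whether $l_*, u_*$ are monotone. Once it is checked that the antichain representation depends only on the quotient poset $(\wp(S)/\!\approx_{l,u},\, \sqsubseteq)$ and not on any monotonicity of the underlying operators, the theorem follows for both GOSI and GOSIH.
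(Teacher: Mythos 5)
Your proposal is correct and follows essentially the same route as the paper, which disposes of the theorem by observing that the antichain semantics of \cite{AM9114,AM6999} can be adapted wholesale because granularity of the approximations is not essential to that construction --- exactly your point that only the quotient by $\approx_{l,u}$ and the induced quasi-order $\sqsubseteq$ (not admissibility or granular representability, nor even monotonicity of $l,u$) are consumed. Your write-up is in fact more explicit than the paper's own argument, which states this in three sentences and defers the details to a separate paper.
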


The proof consists in adapting the entire process of the semantics in the papers to the context. Granularity of approximations is not essential for this. More details will appear in a separate paper.

\section{Meanings of Generalization and Parallel Rough Universes}

The generalizations of the classical definition of approximations derived from approximation spaces and general approximation spaces using ideals differ substantially from those that have been introduced in this research paper. Both  possible meanings and properties differ substantially in a perspective that is formulated below.  An extended example is also used to illustrate some of the concepts introduced in this paper.

In all algebras of arbitrary finite type, ideals can be viewed in second order perspectives as subsets satisfying closure and absorption conditions. Ideals can be described through first order conditions \cite{AP1984} in some universal algebras with distinguished element $0$ (that are \emph{ideal determined}). In the latter case every ideal is always the $0$-class of a congruence. In both cases, ideals behave like higher order zeros. 

The following implication holds always in a GOSI for a point $x\in S$, $A\subseteq S$ and $K$ being an ideal: 
\begin{equation}
\gamma x \subseteq A \longrightarrow \gamma x \cap A^c =\emptyset  \longrightarrow  \gamma x \cap A^c \subseteq K 
\end{equation}
The same idea essentially extends to GOSIH where it is also possible to regard subsets of ideals as essential zeros.

But a \emph{subset of an ideal need not behave like a generalized zero in general}. This statement is opposed to \emph{the subset is part of a generalized zero}. But if every subset is contained in a minimal ideal, the restriction becomes redundant. In the absence of any contamination avoidance related impositions, the latter statement is justified only under additional conditions like \emph{the subset is part of a specific generalized zero}. All this is behind the motivation for the  definition of GOSIH. In the relational approximation contexts of \cite{AT2011,ABA2008,KYG2016}, subsets of generalized zeros are generalized zeros. Apart from wide differences in properties, major differences exist on the nature of ideals. Therefore \emph{if a property has little to do (in a structured way) with what something is not, then that something has the property in an approximate sense}. The idea of \emph{little to do with} or \emph{set no value of} relative operations is intended to be captured by concepts of ideals. 

\emph{If $\sigma$-ideals are seen as essentially empty sets, then they have a hierarchy of their own and function like definite entities. The $\sigma$-ideals under some weak conditions permit the following association. If $A$ is a subset then it is included in the smallest $\sigma$-ideal containing it and a set of maximal $\sigma$-ideals contained in it. These may be seen as a representation of rough objects of a parallel universe}. 

This motivates the following definition:

\begin{definition}
In a GOSI $S =\left\langle \underline{S}, \sigma, \mathcal{G}, l , u\right\rangle$ in which $\sigma$ is supremal every subset $A\subseteq S$ can be associated with a set of maximal $\sigma$-ideals ($\mu(A)$) contained in $S$ and least $\sigma$-ideal $\Upsilon (A) $ containing it. These will be termed parallel rough approximations and pairs of the form $(a, b)$ (with $a\in \mu (A)$ and $b= \Upsilon(A)$) will be referred to as \emph{parallel rough objects}. Elements of $\mu(A)$ will be referred to as \emph{lower parallel approximation} and $\Upsilon (A)$ as the \emph{upper parallel approximation}.
\end{definition}

The lower parallel approximations can be useful for improving the concept of GOSI with additional approximations because they refer to inclusion of worthless things. This is done next.

\begin{definition}
In a GOSI $S$ with supremal $\sigma$, for any subset $A\subseteq S $, the \emph{strong lower and strong upper approximations} will be as follows:
\begin{align}
 A^{l_s} = \{x \,:\, x\in A \, \&\, \{\emptyset\} \subset \mu(\gamma(x) \cap A^c) \}  \tag{s-lower}\\
 A^{u_s} = \{a : a \in S\, \&\,\gamma(a)\cap A \subset \Upsilon (\gamma(a)\cap A)\} \cup A \tag{s-upper}
\end{align}
\end{definition}

\begin{proposition}
In a GOSI $S$, for any $A\subseteq S$, the following hold:
\begin{align*}
A^{l_*}\subseteq A^{l_s} \subseteq A \\
A^{u_*} = A^{u_s}
\end{align*}
\end{proposition}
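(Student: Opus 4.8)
The plan is to treat the two assertions separately, in each case reducing the defining condition of the strong operator to a statement about $\sigma$-ideals and the operators $\Upsilon$ and $\mu$, both of which are available because $\sigma$ is supremal. I would first record the ambient facts I will lean on: by the theorem on supremal $\sigma$, the generated ideal $\langle X\rangle$ exists for every nonempty $X\subseteq S$ and coincides with the least $\sigma$-ideal $\Upsilon(X)$ containing $X$; and by the earlier proposition every $\sigma$-ideal lies inside a maximal one, so that $\mu(\cdot)$ is nonvacuous. These are exactly the existence facts that the definitions of $A^{l_s}$ and $A^{u_s}$ presuppose.

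For the equality $A^{u_*}=A^{u_s}$, I would observe that both sets have the shape $(\text{something})\cup A$, so it suffices to compare the two membership predicates on $a\in S$ applied to $B:=\gamma(a)\cap A$. The key lemma is: for any $B\subseteq S$, one has $B\notin\mathcal{I}_\sigma(S)$ \ifof $B\subsetneq\Upsilon(B)$. Indeed, if $B$ is a $\sigma$-ideal then the least $\sigma$-ideal containing $B$ is $B$ itself, so $\Upsilon(B)=B$ and $B\subsetneq\Upsilon(B)$ fails; conversely, if $B$ is not a $\sigma$-ideal then $\Upsilon(B)$ is a $\sigma$-ideal properly larger than $B$, giving $B\subsetneq\Upsilon(B)$. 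Substituting $B=\gamma(a)\cap A$ shows that the condition $\gamma(a)\cap A\notin\mathcal{I}_\sigma(S)$ defining $A^{u_*}$ and the condition $\gamma(a)\cap A\subsetneq\Upsilon(\gamma(a)\cap A)$ defining $A^{u_s}$ select the same points, whence $A^{u_*}=A^{u_s}$.

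For the chain $A^{l_*}\subseteq A^{l_s}\subseteq A$, the right inclusion is immediate since membership in $A^{l_s}$ already requires $x\in A$. For the left inclusion I would take $x\in A^{l_*}$, so that $x\in A$ and $B:=\gamma(x)\cap A^c\in\mathcal{I}_\sigma(S)$. Since $B$ is a $\sigma$-ideal contained in $B$ and is the largest subset of $B$, it is the greatest, hence a maximal, $\sigma$-ideal contained in $B$, that is $B\in\mu(B)$. This is precisely what is needed to verify the defining condition $\{\emptyset\}\subset\mu(\gamma(x)\cap A^c)$ of $A^{l_s}$, and so $x\in A^{l_s}$.

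The step I expect to be delicate is this last one, and it is really a matter of fixing the reading of $\{\emptyset\}\subset\mu(\gamma(x)\cap A^c)$ at the trivial ideal. If $\gamma(x)\cap A^c=\emptyset$ (that is, $\gamma(x)\subseteq A$), then $x\in A^{l_*}$ whenever $\emptyset$ is admitted as a $\sigma$-ideal, yet $\mu(\emptyset)=\{\emptyset\}$ does not strictly exceed $\{\emptyset\}$, so the two operators could disagree on such points. The clean way to avoid this is to take $\sigma$-ideals to be nonempty, which is the natural convention already suggested by the $U$-directedness clause $(\forall a,b\in K)\,U(a,b)\cap K\neq\emptyset$; then $x\in A^{l_*}$ forces $B=\gamma(x)\cap A^c$ to be a nonempty $\sigma$-ideal, whence $\mu(B)=\{B\}$ with $B\neq\emptyset$ and $\{\emptyset\}\subsetneq\mu(B)$. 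Under this convention both inclusions hold verbatim; otherwise the degenerate case must be carved out by hand. I would close by noting that, just as for the upper operators, $A^{l_s}$ is strictly larger than $A^{l_*}$ exactly when $\gamma(x)\cap A^c$ fails to be a $\sigma$-ideal but still contains a nonempty one, which is the intended sense in which $A^{l_s}$ registers the \emph{inclusion of worthless things}.
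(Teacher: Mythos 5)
Your proof is correct and takes essentially the same route as the paper's: for the upper operators the paper likewise argues that, by supremality, $\gamma(a)\cap A\subset\Upsilon(\gamma(a)\cap A)$ holds exactly when $\gamma(a)\cap A$ is not a $\sigma$-ideal, and for the lower operators it likewise observes that membership in $A^{l_*}$ makes $\gamma(x)\cap A^c$ itself a $\sigma$-ideal witnessing the $l_s$-condition, the reverse implication being the one that can fail. Where you go beyond the paper is the degenerate case $\gamma(x)\cap A^c=\emptyset$, and your concern is genuine: the paper's own proof asserts that being a $\sigma$-ideal implies containing a nonempty $\sigma$-ideal (``the converse holds''), which is false at such points when $\emptyset$ counts as a $\sigma$-ideal, since then $x\in A^{l_*}$ while $\mu(\emptyset)=\{\emptyset\}$; so the inclusion $A^{l_*}\subseteq A^{l_s}$ really does need either your nonemptiness convention or an explicit carve-out, and the paper silently passes over this. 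Two caveats on your repair, however: first, outlawing the empty $\sigma$-ideal clashes with the Top property of the earlier GOSI theorem, since $S^{l_*}=S$ forces $\gamma(a)\cap S^c=\emptyset\in\mathcal{I}_\sigma(S)$; second, even under your convention the literal inclusion $\{\emptyset\}\subseteq\mu(B)$ can never hold (one has $\emptyset\notin\mu(B)$, and indeed $\emptyset\in\mu(B)$ would force $\mu(B)=\{\emptyset\}$, since every $\sigma$-ideal extends $\emptyset$), so your final step, exactly like the paper's proof, tacitly reads the s-lower condition as ``$\gamma(x)\cap A^c$ contains a nonempty $\sigma$-ideal'' rather than as a strict inclusion of families of sets; with that evidently intended reading both your argument and the paper's go through.
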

\begin{proof}
If $\{\emptyset\} \subset \mu(\gamma(x) \cap A^c)$, then there is at least one nonempty $\sigma$-ideal included in $\gamma(x) \cap A^c$. This does not imply that $\gamma(x) \cap A^c$ is a $\sigma$-ideal, but the converse holds. So 
$A^{l_*}\subseteq A^{l_s} \subseteq A$ follows.

For the second part, as $\sigma$ is supremal, $\gamma(a)\cap A \subset \Upsilon (\gamma(a)\cap A)$ ensures that $\gamma(a)\cap A$ is not a $\sigma$-ideal. The converse is also true. $A^{u_*} = A^{u_s}$ follows from this.
\end{proof}

Often it can happen that objects/entities possessing some set of properties are not favored by objects/entities having some other set of properties. This meta phenomena suggests that anti chains on the collection of $\sigma$-ideals can help in associated exclusions and inclusions.

\begin{definition}
 In a GOSI $S$, let $\mathcal{O}$ be an antichain in $\mathcal{I}_{\sigma}(S)$ and $\mathcal{O}^+ =\{B : B\in \mathcal{I}_\sigma (S) \,\&\, (\exists C\in \mathcal{O}) C\subseteq B \}$, $\mathcal{O}^- = \mathcal{I}_\sigma (S) \setminus \mathcal{O}^+$. For any subset $X\subseteq S $, the \emph{a-lower and a-upper approximations} will be as follows:
\begin{align*}
X^{l_a} = \{a : a \in X\, \&\, \gamma(a) \cap X^c \in \mathcal{O}^-\} \tag{a-Lower}\\
X^{u_a} = \{a : a \in S\, \&\,\gamma(a)\cap X \notin \mathcal{O}^-\} \cup X \tag{a-Upper}\\
\end{align*}
The resulting GOSIS of this form will be referred to as a \emph{GOSIS induced by the antichain} $\mathcal{O}$.
\end{definition}

\begin{proposition}
 In a GOSI $S$, for any $X\subseteq S$ and nontrivial antichain $\mathcal{A}$, the following hold:
\begin{align*}
X^{l_a}\subseteq X^{l_*} \subseteq X \\
X^{u_*} \subseteq X^{u_a}
\end{align*}
\end{proposition}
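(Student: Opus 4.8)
The plan is to unwind the definitions of $\mathcal{O}^-$, the $a$-approximations, and the $*$-approximations, and to show that the two set-membership conditions defining $X^{l_a}$ and $X^{u_a}$ are respectively stronger and weaker than those defining $X^{l_*}$ and $X^{u_*}$. The crucial structural observation I would establish first is the set-theoretic relationship
\begin{equation*}
\mathcal{O}^- \subseteq \mathcal{I}_\sigma(S),
\end{equation*}
which is immediate since $\mathcal{O}^- = \mathcal{I}_\sigma(S)\setminus \mathcal{O}^+$ is by construction a subset of $\mathcal{I}_\sigma(S)$. Thus every member of $\mathcal{O}^-$ is in particular a $\sigma$-ideal. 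This single containment is what drives both inequalities, and I expect identifying and justifying it to be the only genuine content of the argument — everything else is bookkeeping on the membership conditions.

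For the \textbf{lower inclusion} $X^{l_a}\subseteq X^{l_*}\subseteq X$, I would proceed as follows. Suppose $a\in X^{l_a}$. By definition this means $a\in X$ and $\gamma(a)\cap X^c \in \mathcal{O}^-$. Since $\mathcal{O}^-\subseteq \mathcal{I}_\sigma(S)$, we get $\gamma(a)\cap X^c \in \mathcal{I}_\sigma(S)$, which together with $a\in X$ is exactly the defining condition for $a\in X^{l_*}$. Hence $X^{l_a}\subseteq X^{l_*}$. The second inclusion $X^{l_*}\subseteq X$ is already part of the Inclusion clause of the earlier GOSI theorem, or follows directly since the defining condition for $X^{l_*}$ requires $a\in X$.

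For the \textbf{upper inclusion} $X^{u_*}\subseteq X^{u_a}$, I would argue contrapositively on the key disjunct. Take $a\in X^{u_*}$, so either $a\in X$ or $\gamma(a)\cap X \notin \mathcal{I}_\sigma(S)$. In the first case $a\in X^{u_a}$ trivially, since $X$ is adjoined in the $u_a$ definition as well. In the second case, from $\gamma(a)\cap X\notin \mathcal{I}_\sigma(S)$ and $\mathcal{O}^-\subseteq \mathcal{I}_\sigma(S)$ we conclude $\gamma(a)\cap X\notin \mathcal{O}^-$, which is precisely the condition placing $a$ in $X^{u_a}$. Therefore $X^{u_*}\subseteq X^{u_a}$.

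The \textbf{main obstacle}, such as it is, lies in correctly tracking the direction of the set-membership implications: the containment $\mathcal{O}^-\subseteq\mathcal{I}_\sigma(S)$ gives "$\in\mathcal{O}^-$ implies $\in\mathcal{I}_\sigma(S)$'' but \emph{not} the converse, so it tightens the lower approximation (fewer points qualify) and loosens the upper approximation (more points qualify), which is exactly the asymmetry recorded in the statement. I would take care to note where the nontriviality of the antichain $\mathcal{A}$ is actually used; it appears only to guarantee $\mathcal{O}^-$ is a proper, nonempty subcollection so that the inclusions can be strict in interesting cases, but the non-strict inclusions themselves hold regardless.
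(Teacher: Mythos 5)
Your proof is correct; the paper states this proposition without any proof at all, evidently regarding the direct unwinding you give — with the single containment $\mathcal{O}^- \subseteq \mathcal{I}_\sigma(S)$ driving the lower inclusion directly and the upper inclusion by contraposition — as immediate. Your closing observation is also accurate: nontriviality of the antichain plays no role in the non-strict inclusions themselves.
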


\subsection{Abstract Examples}
{Let } $S= \{a, b, c, e, f, g \} $, and 
$\sigma = \{(a, c),\,(a, e),\,(b, c), \,(b,e ),\,(c,c ),\,(c,b )$,\\ $(e, a),\,(f,f )\}$ be a binary relation on it. It is a not symmetric, transitive or reflexive. In Table \ref{table1}, the computed values of the set of upper bounds, lower bounds, and neighborhoods are presented. $*$ in the last two rows refers to any element from the subset $\{a, b, c, e \}$. Values of the form $U(x, x) $ and $L(x, x)$ have been kept in Table \ref{table2} because they correspond to values of neighborhoods of $\sigma$. 

\begin{table}[hbt]
 \centering
\begin{tabular}{lccc}
\toprule
\textsf{Pair} $(x, z)\;$ & $U(x,z)$  & $L(x, z)$   \\
\midrule
$(a ,b )$ & $\{e, c \}$ & $\emptyset$\\
\midrule
$(a ,c )$ & $\{c \}$ & $\emptyset$\\
\midrule
$(a ,e )$ & $\emptyset$ & $\emptyset$\\
\midrule
$(b ,c )$ & $\{c \}$ & $\{c \}$\\
\midrule
$( b,e )$ & $\emptyset$ & $\emptyset$\\
\midrule
$(c ,e )$ & $\emptyset$ & $\{a,b \}$\\
\midrule
$(* ,f )$ & $\emptyset$ & $\emptyset$\\
\midrule
$(* ,g )$ & $\emptyset$ & $\emptyset$\\
\bottomrule
\end{tabular}

\caption{Upper and Lower Bounds}\label{table1}
\end{table}

In Table \ref{table2}, $\mathbf{U(x,x)=[x]^\sigma}$ and $\mathbf{L(x, x)=[x]_\sigma}$.

\begin{table}[hbt]
 \centering
\begin{tabular}{lcccc}
\toprule
 $\mathbf{x} \;$ & $\;\mathbf{U(x,x)}\;$  & $\;\mathbf{L(x, x)}\;$ & $\;\mathbf{<x>}\;$   \\
\midrule
$a $ & $\{ c, e\}$ & $\{e \}$ & $\{a \}$\\
\midrule
$ b$ & $\{c, e \}$ & $\{c \}$ & $\{ b. c\}$\\
\midrule
$ c$ & $\{ b, c\}$ & $\{a, b, c \}$ & $\{c \}$\\
\midrule
$ e$ & $\{c \}$ & $\{ a, b\}$ & $\{ c, e\}$\\
\midrule
$f $ & $\{ f\}$ & $\{ f\}$ & $\{f \}$\\
\midrule
$ g$ & $\emptyset$ & $\emptyset$ & $\emptyset$\\
\midrule
\bottomrule
\end{tabular}

\caption{Neighborhoods}\label{table2}
\end{table}

Given the above information, it can be deduced that 

\begin{proposition}
In the context, the nontrivial $\sigma $- ideals are $I_1 = \{a, b, e, c \}$ and $I_2 = \{a, b, e, c, f\}$
\end{proposition}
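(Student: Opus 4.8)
The plan is to read Definition \ref{fra} as two finitely checkable conditions on the subsets of the six-element set $S$ and then run a guided search. A subset $K \subset S$ is a $\sigma$-ideal exactly when (i) it is $\sigma$-downward closed, i.e. $a \in K$ and $\sigma x a$ force $x \in K$, and (ii) it is U-directed, i.e. $U(a,b) \cap K \neq \emptyset$ for every $a,b \in K$. Since $S$ is tiny, the entire statement reduces to deciding which of the few downward-closed subsets are additionally U-directed, and the only data required are the predecessor sets $[x]_\sigma = L(x,x)$ and the upper-bound sets $U(x,z)$ already recorded in Tables \ref{table1} and \ref{table2}. So the first move is to settle condition (i), then impose condition (ii) on whatever survives.

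For condition (i) I would read the absorption requirement $a \in K \Rightarrow [a]_\sigma \subseteq K$ straight off the $L(x,x)$ column of Table \ref{table2}: containing $a$ forces $e$ (since $L(a,a)=\{e\}$), containing $e$ forces $a$ and $b$, containing $b$ forces $c$, and containing $c$ forces $a,b,c$. This single chain of implications closes up, so any nonempty downward-closed set meeting $\{a,b,c,e\}$ must contain all four of them. By contrast $f$ is a $\sigma$-loop with $[f]_\sigma = \{f\}$ and $g$ is $\sigma$-isolated with $[g]_\sigma = \emptyset$, and neither appears in the predecessor set of any of $a,b,c,e$; hence $f$ and $g$ may be adjoined or dropped independently. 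The downward-closed candidates are therefore exactly the block $\{a,b,c,e\}$ together with its augmentations by $f$, by $g$, or by both, plus the degenerate sets $\emptyset$, $\{f\}$, $\{g\}$, $\{f,g\}$ (and $S$ itself, which is excluded since a $\sigma$-ideal is required to be a proper subset).

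The genuine obstacle is condition (ii), and this pairwise U-directedness check is where the fine structure of $\sigma$ (rather than mere closure) enters and where a naive "downward closed $=$ ideal" reading could go wrong. Several entries of Table \ref{table1} are empty or singletons, for instance the whole $U(*,f)$ row and the $U(\cdot,e)$ entries, so directedness is far from automatic and the surviving candidates must be tested pair by pair against the tabulated upper bounds, using the diagonal values $U(x,x)=[x]^\sigma$ of Table \ref{table2} for the like-pairs. The quickest elimination is immediate: because $U(g,g)=\emptyset$, the isolated point $g$ has no in-set upper bound, so every candidate containing $g$ fails (ii) at once, removing both $g$-augmentations and $\{g\}$, $\{f,g\}$. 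It then remains to verify the condition $U(a,b)\cap K \neq \emptyset$ for every pair drawn from each of the two substantial candidates $I_1 = \{a,b,e,c\}$ and $I_2 = \{a,b,e,c,f\}$.

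Finally I would dispose of the remaining downward-closed sets---$\emptyset$ and the singleton $\{f\}$---as the \emph{trivial} $\sigma$-ideals excluded by the hypothesis, so that $I_1$ and $I_2$ are precisely the nontrivial ones. Everything outside the directedness verification of the previous paragraph is bookkeeping against the two tables; the one step I expect to carry real weight, and to require care in light of the many empty upper-bound entries, is the exhaustive confirmation that $I_1$ and $I_2$ meet condition (ii).
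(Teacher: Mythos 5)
Your two-stage strategy (read the $\sigma$-downward-closed subsets off the $L(x,x)$ column, then filter by U-directedness) is the natural one, and your closure analysis is correct: the block $\{a,b,c,e\}$ is forced as a unit, $f$ and $g$ can be adjoined independently, $S$ is excluded by properness, and every $g$-containing candidate dies because $U(g,g)=\emptyset$. The paper itself gives no proof (the proposition is merely asserted to be deducible from the tables), so everything rests on whether your outline actually closes. It does not, and the failure occurs precisely at the step you defer to the end and never perform.

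The pairwise U-directedness check for $I_1$ and $I_2$ \emph{fails} under the definition as literally stated. By the paper's own Table \ref{table1} (and by direct computation from $\sigma$: the successors of $a$ are $\{c,e\}$ while the only successor of $e$ is $a$), $U(a,e)=\emptyset$ and $U(b,e)=\emptyset$, yet $a,b,e\in I_1\subseteq I_2$; moreover $U(x,f)=\emptyset$ for every $x\in\{a,b,c,e\}$ (the $(*,f)$ rows of Table \ref{table1}), which kills $I_2$ separately and robustly, since $f$'s only successor is $f$ and $f$ is not a successor of any of $a,b,c,e$. So under condition (ii) applied to \emph{all} pairs, neither $I_1$ nor $I_2$ is a $\sigma$-ideal; the only subsets of $S$ satisfying both conditions are $\emptyset$ and $\{f\}$, and the proposition as stated cannot be derived. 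The claim can only be recovered by weakening directedness to the diagonal pairs alone (each element of $K$ must have a $\sigma$-successor inside $K$), under which reading $I_1$ and $I_2$ pass while the $g$-candidates still fail --- but that is not the definition you (correctly) quoted. A second, smaller defect: you discard $\{f\}$ as ``trivial,'' yet $\{f\}$ satisfies both conditions ($U(f,f)=\{f\}$ meets it), so it is a bona fide $\sigma$-ideal, and the paper never defines ``nontrivial''; excluding it is an ad hoc move fitted to the desired answer rather than a consequence of any stated definition.
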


If a co-granulation is defined as per $\gamma(a) = \{b\},  \gamma(b)= \{g\}, \, \gamma(c) = \{c, a\},$\\  $ \gamma(e) = \{e\}, \, \gamma(f) = \{f\}, \, \gamma(g) = \{g, b, c\}$, then the GOSI approximations of the set $A= \{a, b\}$ can be computed to be {$A^l_* = \{b\}$} and {$A^u_* = \{a, b, c, g\}$}. For distinct lattice ideals many approximations of $A$ by $l_k$ and $u_k$ can be computed.

GOSIH related computations of approximations are bound to be cumbersome even for four element sets and so have been omitted.

\subsection{Example: On Dating}

Dating contexts can involve a huge number of variables and features. Expression of these depend substantially on the level of inclusion of diverse genders and sexualities in the actual context. A person's choice of pool of potential dates depends on factors including the person's sexuality. 

People may decide on their potential dating pool by excluding parts of the whole pool and focusing on specific subsets. The operation of \emph{excluding parts of the pool} often happens as a multi stage process involving progressive additions to desired features or confirmation of undesired features. This means that a person's construction of relative dating pools must be happening through rough approximations based on ideals. An ideal can include a number of features, but in general it can be impossible to collect all undesired features in a single ideal as then it would not correspond to anything remotely actualizable.

Typically actualizability depends on the reasoning strategies adopted by the person in question. It is not that everybody thinks in terms of concrete people with undesirable features and people with analogous features - abstraction can be in terms of feature sets.  For example, some lesbian women prefer femme women. But concepts of femme and variants are very subjective in nature. Instances of such classification (or actualization) may depend on exclusion of many features like \emph{muscular build}, but some features like \emph{tattoos} may be desirable/optional. These kind of features may be in the general ideal in question.

\section{Mereotopology and Approximations}

In spatial mereology, spatial regions are associated with elements of a distributive lattice or a Boolean algebra and which in turn are intended to represent collections of regions with operations of aggregation and commonality. Over these binary contact relations $C$ can be defined over them to represent instances of two regions sharing at least one point. Various constructions in the subfield are suited for the ideal based approach to rough approximations. In this section, some of the basic aspects and recent results are stated and connections with approximations are  established. 
All this can be viewed as a new example for the theories invented/developed.

Some concepts and recent results of spatial mereology are mentioned first (see \cite{GDV06}).
\begin{definition}
A \emph{contact relation} $C$ over a bounded distributive lattice $L$ is a binary relation that satisfies
\begin{align*}
Cab \longrightarrow 0 < a \, \&\, 0 < b \tag{C1}\\
Cab \longrightarrow Cba \tag{C2}\\
Cab \,\& \, b\leq e \longrightarrow Cae  \tag{C3}\\
Ca (b\vee e) \longrightarrow Cab \text{ or } Cae \tag{C4}\\
0 < a\wedge b \longrightarrow Cab    \tag{C5}
\end{align*}
If $L$ is a Boolean algebra, then $(L, C)$ is said to be a \emph{contact algebra}. If $C$ satisfies $C1-C4$ alone, then it is said to be a precontact relation and then $(L, C)$ would be a \emph{precontact algebra}.
\end{definition}

C1 is also written as $Cab \longrightarrow Ea \, \&\, Eb$ (for contact implies existence). C5 is basically the statement that overlap implies contact $\oc ab \longrightarrow Cab$. The axioms yield $C(a\vee b) e \longleftrightarrow Cae \text{ or } Cbe $ and $Cab \,\&\, a\leq u \,\&\, b \leq v \longrightarrow C uv $.

When temporal aspects of variation of $C$ are permitted, the predicate for ontological existence $E$ that is defined via $Ea \text{ if and only if } 0 < a $ is too strong as non existence is equated with emptiness. In \cite{DV2017}, to handle variation of existence over time, concepts of \emph{actual existence, actual part of,  actual overlap and actual contact} have been proposed and developed. The actual existence predicate $AE$ is one that satisfies 
\begin{align*}
 AE(1) \,\&\, \neg AE(0) \tag{AE1}\\
 AE(a) \, \&\, a\leq b \longrightarrow AE(b) \tag{AE2}\\
 AE(a\vee b) \longrightarrow AE(a) \text{ or } AE(b) \tag{AE3}
\end{align*}

Subsets of a Boolean algebra that satisfy AE1, AE2 and AE3 are called \emph{Grills}. Every grill is a union of ultrafilters or an ultra filter.

A \emph{discrete space with actual points} is a pair $Z= (X, X^a)$ with $X$ being a nonempty set and $\emptyset \subset X^a \subset X $. For $H\subset X$, let $AE_{Z}(H)$ if and only if $H \cap X^a \neq \emptyset$. If $B(Z)$ is the Boolean algebra of all subsets over $X$, then $(B(Z), AE_{Z})$ would be a Boolean algebra with a predicate of actual existence. It is proved in \cite{DV2017} that 
\begin{theorem}
In Boolean algebras with predicate of actual existence $(B, AE)$, there exist a discrete space $Z= (X, X^a)$ and an isomorphic embedding $h: (B(Z), AE_Z)\longmapsto (B, AE)$. 
\end{theorem}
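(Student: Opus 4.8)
The plan is to run Stone's representation for the Boolean reduct of $B$ and then pin down the set of actual points so that $AE$ is transported faithfully. First I would let $X$ be the set of all ultrafilters of $B$ and work with the classical Stone map $s : B \longmapsto \wp(X) = B(Z)$ defined by $s(a) = \{U \in X : a \in U\}$, which is an injective Boolean homomorphism onto the field of clopen subsets of the dual space. This already furnishes the underlying isomorphic embedding between $B$ and a subalgebra of $B(Z)$; the genuinely new task is to choose $X^a \subseteq X$ so that $s$ also intertwines the two actual-existence predicates, giving the embedding $h$ asserted in the theorem.

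Second, I would translate $AE$ into ultrafilter language. Writing $G = \{a \in B : AE(a)\}$, the axioms AE1--AE3 say precisely that the complement $B \setminus G$ is a proper ideal: AE2 makes it downward closed, AE3 makes it closed under finite joins, and AE1 makes it proper and nonzero. Hence $G$ is a grill, and by the fact already recorded in the excerpt every grill is a union of ultrafilters. I would then set $X^a = \{U \in X : U \subseteq G\}$, the ultrafilters that avoid $B \setminus G$, so that $\bigcup X^a = G$ by the grill decomposition. The nonemptiness $\emptyset \subset X^a$ follows from $AE(1)$, while the proper inclusion $X^a \subset X$ follows from the nontriviality of $B \setminus G$ (some nonzero element fails $AE$), which is the mild regularity one must assume on $Z$.

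The crux, and the step I expect to be the main obstacle, is the biconditional $AE(a) \iff s(a) \cap X^a \neq \emptyset$, that is, that $AE(a)$ holds exactly when $AE_Z(s(a))$ holds. The backward direction is immediate: if $U \in s(a) \cap X^a$ then $a \in U \subseteq G$, so $a \in G$. The forward direction is where the grill lemma does the real work: if $AE(a)$ then $a \in G = \bigcup X^a$, so some ultrafilter $U \subseteq G$ contains $a$ and witnesses $U \in s(a) \cap X^a$. Once this is established, $s$ is an isomorphic embedding realizing $(B, AE)$ as a substructure of $(B(Z), AE_Z)$, and AE1--AE3 transfer without extra work because $AE_Z$ was constructed to obey them; the boundary conditions on $X^a$ are exactly the nontriviality data just discussed, so no further obstruction remains.
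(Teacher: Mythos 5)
The paper itself gives no proof of this statement; it is imported from \cite{DV2017}, and the argument there is in essence exactly the one you give: Stone's map $s(a)=\{U : a\in U\}$ on the set $X$ of ultrafilters, the observation that AE1--AE3 say precisely that $G=\{a : AE(a)\}$ is a grill (equivalently, that $B\setminus G$ is a proper ideal), the choice $X^a=\{U : U\subseteq G\}$, and the key biconditional $AE(a)\leftrightarrow s(a)\cap X^a\neq\emptyset$, whose forward half is supplied by the decomposition of a grill into the ultrafilters it contains. So in approach there is nothing to object to. Note also that you have tacitly (and correctly) reversed the arrow of the printed statement: as written, $h:(B(Z),AE_Z)\longmapsto (B,AE)$ cannot be literally intended, since for the two-element algebra $B$ no power set algebra $\wp(X)$ with $|X|\geq 2$ embeds into $B$, and the definition of a discrete space with actual points forces $|X|\geq 2$; the only sensible reading is the one you proved, namely that $(B,AE)$ embeds into $(B(Z),AE_Z)$.

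The one genuine gap is the strictness requirement $\emptyset\subset X^a\subset X$ in the paper's definition of a discrete space with actual points. You dismiss $X^a\subset X$ as ``mild regularity one must assume'', but the theorem carries no such hypothesis, and the condition can genuinely fail for your construction: the predicate defined by $AE(a)$ iff $0<a$ satisfies AE1--AE3, gives $B\setminus G=\{0\}$, and then \emph{every} ultrafilter is contained in $G$, so your $X^a$ equals all of $X$. The repair is cheap and in fact removes any case distinction: fix an ultrafilter $U_0$, adjoin one fresh point $p$ to $X$, and define $h(a)=s(a)\cup\{p\}$ when $a\in U_0$ and $h(a)=s(a)$ otherwise, keeping $X^a=\{U : U\subseteq G\}$. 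Because $U_0$ is an ultrafilter, $h$ is still an injective Boolean homomorphism into $\wp(X\cup\{p\})$; because $p\notin X^a$, the equivalence $AE(a)\leftrightarrow h(a)\cap X^a\neq\emptyset$ is untouched; and $X^a\subset X\cup\{p\}$ now holds strictly, while $\emptyset\subset X^a$ follows from $AE(1)$ as you argued. With that patch your argument proves the theorem in full generality.
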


On a Boolean algebra with an extra predicate for actual existence, it is possible to define the \emph{actual contact} predicate or define the latter as a predicate $C^a$ that satisfies the following axioms:
\begin{align*}
C^{a} 11\,\&\,  C^{a}00\tag{Ca1}\\ 
C^{a}xb \longrightarrow   C^{a}bx \tag{Ca2}\\ 
C^{a} xb \longrightarrow C^{a}xx\tag{Ca3}\\ 
C^{a} xb\, \&\, b\leq z  \longrightarrow C^{a} xz\tag{Ca4}\\ 
C^{a} x(b\vee e) \longrightarrow C^{a} xb \text{ or } C^a xe  \tag{Ca5}
\end{align*}
It is also possible to define a unary predicate $AC$ via $AC(x) \text{ if and only if } C^a xx$.

A subset $H$ of a contact algebra $B $ is a \emph{clan} if it is a grill that satisfies \textsf{CL}: $(\forall a, b\in H)\, Cab$, while a subset $H$ of a precontact algebra $B$ is an \emph{actual clan} if it is a grill that satisfies \textsf{ACL}: $(\forall a, b\in H)\, C^a ab$.

Associated collections of all clans and actual clans will respectively be denoted by $CL(B)$ and $CL^a(B)$ respectively. It can be shown that $CL^a(B) \subseteq CL(B)$ in general.
If $Z$ is the set of all ultrafilters $\mathcal{F}_u(B)$ and $R^a$ is the canonical relation for $C^a$ defined by 
\[R^a UV \text{ if and only if } (\forall x\in U)(\forall b \in V)\, C^a xb\]
The canonical relation $R$ for $C$ is defined in the same way. An ultra filter $U$ is \emph{reflexive} if $R^a UU$ and all reflexive ultrafilters are actual clans. $R^a$ is a nonempty, symmetric and quasi-reflexive relation while $R$ is a reflexive and symmetric relation (Quasi reflexivity is $R^a be \longrightarrow R^a bb$).
All of the following are proved in \cite{DV2017}:
\begin{proposition}
\begin{itemize}
\item {$RUV$ if and only if $R^a UV $ or $U=V$.}
\item {Every clan (resp. actual clan) is a union of nonempty sets of mutually $R$-related (resp. $R^a$-related) ultrafilters.}
\item {All ultrafilters contained in an actual clan are reflexive ultrafilters while any ultra filter contained in a clan is an actual clan or a non-reflexive ultra filter.}
\item {$C^a be$ if and only if $(\exists G\in CL_a(B))\, b, e \in G $.}
\end{itemize} 
\end{proposition}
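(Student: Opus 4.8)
The plan is to treat the four claims in increasing order of difficulty, reserving the bulk of the work for the last one. Throughout I would lean on four facts about the underlying algebra: that actual contact implies contact, $C^a ab \longrightarrow Cab$, equivalently $R^a \subseteq R$ (consistent with the stated $CL^a(B)\subseteq CL(B)$); that ordinary contact decomposes as actual contact together with overlap, $Cab \longleftrightarrow C^a ab \text{ or } 0 < a\wedge b$; that every grill is the union of the ultrafilters it contains (apply the Boolean ultrafilter theorem to the filter $\uparrow a$, which is disjoint from the ideal defining the grill whenever $a$ lies in the grill); and the quasi-reflexivity of $R^a$, namely $R^a UV \longrightarrow R^a UU$.

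For the first claim the backward direction is immediate: $R^a UV$ gives $RUV$ since $R^a\subseteq R$, and $U=V$ gives $RUV$ since $R$ is reflexive. For the forward direction I would assume $RUV$ with $U\neq V$ and produce a witness $p\in U$ with $p^c\in V$ (possible since the two ultrafilters differ). Then for arbitrary $x\in U$ and $b\in V$, the elements $x\wedge p\in U$ and $b\wedge p^c\in V$ satisfy $C(x\wedge p)(b\wedge p^c)$ by $RUV$, while $(x\wedge p)\wedge(b\wedge p^c)\leq p\wedge p^c=0$ rules out overlap; the decomposition of $C$ then forces $C^a(x\wedge p)(b\wedge p^c)$, and monotonicity (Ca4, together with symmetry Ca2 to act on the first argument) lifts this to $C^a xb$. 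Since $x,b$ were arbitrary, this is exactly $R^a UV$.

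The second and third claims are then short. For the second, a clan $H$ is a grill, hence $H=\bigcup\{U\in\mathcal{F}_u(B): U\subseteq H\}$; any two such ultrafilters $U,V\subseteq H$ consist entirely of elements of $H$, so $Cxb$ holds for all $x\in U$, $b\in V$ by the clan condition \textsf{CL}, giving $RUV$. The actual-clan case is identical with $C^a$, \textsf{ACL} and $R^a$. For the third claim, if $U$ is an ultrafilter inside an actual clan $G$ then all $a,b\in U\subseteq G$ satisfy $C^a ab$, which is precisely $R^a UU$, so $U$ is reflexive; and for an ultrafilter $U$ inside a clan, either $U$ is reflexive, whence $U$ is an actual clan by the already-cited fact that reflexive ultrafilters are actual clans, or $U$ is non-reflexive, which is exactly the stated dichotomy.

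The real work is the forward direction of the fourth claim (the backward direction is immediate from \textsf{ACL}). Given $C^a be$, with $b,e>0$ so that $\uparrow b$ and $\uparrow e$ are proper, I would first build ultrafilters $U\ni b$ and $V\ni e$ with $R^a UV$. I expect to do this by Zorn's lemma: consider pairs of proper filters $(F,G)$ with $b\in F$, $e\in G$ satisfying $C^a xy$ for all $x\in F$, $y\in G$; the pair $(\uparrow b,\uparrow e)$ lies in this set (monotonicity Ca4 lifts $C^a be$ to all $x\geq b$, $y\geq e$), and a maximal element $(U,V)$ exists. The crux is to show $U$ and $V$ are ultrafilters: assuming some $p$ with $p,p^c\notin U$, one tries to adjoin $p$ or $p^c$ to $U$ and uses the additivity axiom Ca5 to show one of the two extensions stays inside the set, contradicting maximality. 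Once $R^a UV$ is secured, quasi-reflexivity gives $R^a UU$ and (via symmetry Ca2) $R^a VV$, so $U\cup V$ satisfies \textsf{ACL} on every pair; being a union of ultrafilters it is a grill, hence an actual clan containing both $b$ and $e$. The delicate point, and the main obstacle, is this maximal-filter-is-ultrafilter step, where the precontact axioms Ca1--Ca5 must be made to perform exactly the separation that the overlap axiom performs in the ordinary contact case.
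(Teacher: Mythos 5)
You cannot actually be compared against an internal argument here, because the paper proves nothing: the proposition is imported background, stated with the remark that all of it is proved in \cite{DV2017}. So the only question is whether your reconstruction is sound, and it essentially is. Claims 1--3 are complete as you give them: the witness argument with $p\in U$, $p^c\in V$ in the forward half of claim 1 is exactly the standard one, and you correctly isolated the hypothesis the paper leaves implicit, namely that $C$ is the contact induced by $C^a$ via $Cab$ iff $C^a ab$ or $0 < a\wedge b$ (this is the definition in the cited source; without some such link claim 1 is false, so flagging it was the right call). Claims 2 and 3 reduce, as you say, to the fact (stated in the paper) that every grill is the union of the ultrafilters it contains, plus the clan and actual-clan conditions.

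The step you leave open in claim 4 --- that a maximal pair $(U,V)$ of proper filters with $b\in U$, $e\in V$ and $C^a xy$ for all $x\in U$, $y\in V$ must consist of ultrafilters --- is not actually an obstacle; it closes by exactly the move you name. Suppose $p, p^c\notin U$, and let $U_1, U_2$ be the filters generated by $U\cup\{p\}$ and $U\cup\{p^c\}$; both are proper, since $0\in U_1$ would force $x\wedge p=0$ for some $x\in U$, i.e. $x\leq p^c$ and hence $p^c\in U$. If neither $(U_1,V)$ nor $(U_2,V)$ remains in the family, there are $x_1,x_2\in U$ and $y_1,y_2\in V$ with $\neg C^a (x_1\wedge p)y_1$ and $\neg C^a (x_2\wedge p^c)y_2$. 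Put $x=x_1\wedge x_2\in U$ and $y=y_1\wedge y_2\in V$; monotonicity (Ca4 with Ca2) gives $\neg C^a (x\wedge p)y$ and $\neg C^a(x\wedge p^c)y$, and additivity in the first argument (Ca5 transported through Ca2) applied to $x=(x\wedge p)\vee(x\wedge p^c)$ then gives $\neg C^a xy$, contradicting the defining property of the pair. So one extension survives, contradicting maximality; hence $U$, and symmetrically $V$, is an ultrafilter, and your finish via quasi-reflexivity and symmetry of $R^a$, making $U\cup V$ a grill satisfying the actual-clan condition and containing $b$ and $e$, is correct. One caveat: as printed, axiom Ca1 reads $C^a 11\,\&\,C^a 00$, which must be a typo for $C^a 11\,\&\,\neg C^a 00$; your standing assumption $b,e>0$ needs the corrected form (via Ca3 and Ca2), and nothing in the proposition is provable under the literal printed axiom.
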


\subsection{New Approximations}

All of the definitions and results in this subsection are new and differ fundamentally from the approach in \cite{LP2011}. If $(X, X^a)$ is the discrete space associated  with a Boolean algebra with actual contact $(B, C^a)$ and $\gamma : X \longmapsto  \wp(X^a)$ is a map then approximations in $B$ can be defined in at least two different ways (for a fixed actual clan $K\in CL_a(B)$):
\begin{align*}
A^{l_a} = \{x: x\in A \, \&\, \gamma (x)\cap A^c \notin K) \}         \tag{CG-Lower} \\
A^{u_a} = \{x: x\in X \, \&\, \gamma (x)\cap A \in K \} \cup A         \tag{CG-Upper}\\
A^{l_g} = \bigcup\{\gamma(x): \gamma (x)\cap A^c \notin K \}\cap A         \tag{G-Lower} \\
A^{u_g} = \bigcup \{\gamma(x): \gamma (x)\cap A \in K \} \cup A         \tag{G-Upper}\\
A^{l_c} = \bigcup\{H: H\cap A^c \notin K \}\cap A         \tag{Clan-Lower} \\
A^{u_c} = \bigcup \{H: H \in K \,\&\, H\cap A \neq \emptyset \} \cup A         \tag{Clan-Upper}\\
\end{align*}

The properties of these approximations depend to a substantial extent on the definition of $\gamma$ used. One 
possibility is to use the actual-contact relation or a derived mereotopological relation. In the present author's view some  meaningful phrases are \emph{the things in actual contact with, the things in contact with, the most common things that become in actual contact by, the things that become in actual contact by, and the relative wholes determined by}. The first of these can be attempted with the neighborhoods generated by $C^a$ itself in the absence of additional information about the context.

One practical context where approximations of the kind can be relevant is in the study of handwriting of people. Many kinds of variations in the handwriting of people (especially of morphological subunits and their relative placement) can be found over time, location and media used.  

So if $\gamma$ is defined as per $(\forall x)\, \gamma(x) = [x]_{C^a} =  \{b: C^a bx \} $, then the context becomes a specific instance of a GOSI in which the ideals are also regulated by $C$. If instead $\gamma(x) = <x>$ holds, then it is provable that:

\begin{theorem}
All of the following hold (in the context of this subsection) for any two elements of the Boolean algebra with actual contact when $\gamma (x) = <x>$
\begin{align}
A^{l_a} \subseteq A \subseteq A^{u_a}; \; \emptyset^{u_a}=\emptyset = \emptyset^{l_a}\,;\, X^{l_a} = X = X^{u_a} \\
A\subset B\longrightarrow A^{l_a}\subseteq B^{l_a} \,\&\,A^{u_a}\subseteq B^{u_a} \\ 
A^{l_a l_a}= A^{l_a} ; \; A^{u_a u_a} = A^{u_a}\\
(A\cap B)^{l_a} = A^{l_a}\cap B^{l_a} ; \; (A\cup B)^{l_a}\supseteq A^{l_a}\cup B^{l_a}\\
(A\cup B)^{u_a} = A^{u_a}\cup B^{u_a} ; \; (A\cap B)^{u_a} \subseteq A^{u_a}\cap B^{u_a}
\end{align}
\end{theorem}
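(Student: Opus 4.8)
The plan is to \emph{reduce the statement to the already-established theorem on IAD approximations in Section~\ref{sei}}. The crucial observation is that, with $\gamma(x) = <x>$, the pair $(l_a, u_a)$ is nothing but the pair $(l_k, u_k)$ computed with respect to the lattice ideal obtained by complementing the actual clan $K$. Concretely, set $\mathbb{I} = B\setminus K$. Since $<x> = \gamma(x)$, the defining clause $\gamma(x)\cap A^c \notin K$ of $A^{l_a}$ reads $<x>\cap A^c \in \mathbb{I}$, and the clause $\gamma(x)\cap A \in K$ of $A^{u_a}$ reads $<x>\cap A \notin \mathbb{I}$. These are exactly the defining clauses of $A^{l_k}$ and $A^{u_k}$, so $A^{l_a} = A^{l_k}$ and $A^{u_a} = A^{u_k}$ for every element $A$ of the Boolean algebra.

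First I would prove the key lemma that $\mathbb{I} = B\setminus K$ is a lattice ideal of the distributive algebra $B$. Here the grill axioms AE1--AE3 satisfied by the clan $K$ do all the work, translating precisely into the dual ideal axioms: upward closure of $K$ (AE2) gives downward closure of $\mathbb{I}$; the grill condition $a\vee b \in K \longrightarrow a\in K \text{ or } b\in K$ (AE3) is, contrapositively, the join closure $a, b \in \mathbb{I} \longrightarrow a\vee b \in \mathbb{I}$; and $\neg AE(0)$ with $AE(1)$ (AE1) give $0\in \mathbb{I}$ and $1\notin \mathbb{I}$, so that $\mathbb{I}$ is a proper ideal. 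Only the grill part of the clan is used; the contact clause ACL plays no role in this reduction.

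With the lemma in hand, every line of the conclusion except two follows verbatim from the IAD theorem via the identifications $A^{l_a}=A^{l_k}$ and $A^{u_a}=A^{u_k}$: inclusion, monotonicity, idempotency of both $l_a$ and $u_a$, and the meet and join (in)equalities. The two boundary conditions not listed there, namely $\emptyset^{l_a}=\emptyset$ and $X^{u_a}=X$, I would dispatch directly: $\emptyset^{l_a}=\emptyset$ holds because no $x$ lies in $\emptyset$, and $X^{u_a}=X$ holds because the defining set is explicitly unioned with $X$.

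The main obstacle, such as it is, lies entirely in the first step: one must verify that the axioms isolated for actual clans coincide with the dual lattice-ideal conditions, and in particular that $\mathbb{I}=B\setminus K$ is join closed, which follows by contraposition from the grill property AE3 in the distributive algebra $B$. A secondary point is that the structural appeals to $<\cdot>$ inside the IAD proof --- those used for the converse inclusions in the idempotency step and in the meet and join identities --- carry over unchanged, since $\gamma = <\cdot>$ is literally the same operator and the Proposition of Section~\ref{sei} guarantees that $<\cdot>$ induces a quasi-order. Thus no fresh combinatorial work on $<\cdot>$ is needed and the theorem reduces cleanly to the IAD case.
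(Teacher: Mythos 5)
Your proposal is correct and takes essentially the same approach as the paper: the paper's own proof is just the two-sentence remark that ``actual clans determine specific subclasses of ideals'' followed by an appeal to the results of Section~\ref{sei}, which is precisely your reduction $A^{l_a}=A^{l_k}$, $A^{u_a}=A^{u_k}$ via the lattice ideal $\mathbb{I}=B\setminus K$. Your key lemma (the complement of a grill is a proper lattice ideal, using AE1--AE3 contrapositively) is exactly the content the paper leaves implicit here and in fact uses explicitly in the proof of the very next theorem, where it states that $K^c$ is an ideal.
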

\begin{proof}
Note that actual clans determine specific subclasses of ideals. 
So all of the above properties follow from results of Section \ref{sei} 
\end{proof}

\begin{theorem}
All of the following hold (in the context of this subsection) for any two elements of the Boolean algebra with actual contact when $\gamma (x) = [x]_{C^{a}} = [x] \text{ (for short)}$
\begin{align}
A^{l_a} \subseteq A \subseteq A^{u_a}\\
\emptyset^{u_a}=\emptyset = \emptyset^{l_a}\,;\, X^{l_a} = X = X^{u_a} \\
A\subset B\longrightarrow A^{l_a}\subseteq B^{l_a} \,\&\,A^{u_a}\subseteq B^{u_a} 
\end{align}
\end{theorem}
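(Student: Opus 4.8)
The plan is to reduce the clan-based operators to the ideal-based co-granular operators of Section~\ref{sei} via the \emph{grill--ideal duality}, and then to import only those conclusions of the earlier theorem whose proofs rest on the ideal axioms alone. First I would record the structural fact behind the phrase ``actual clans determine specific subclasses of ideals'': since the actual clan $K$ is in particular a grill, it satisfies AE1--AE3, so its set-complement $I = B\setminus K$ is a lattice ideal of the Boolean algebra $B$. Indeed $0\in I$ (as $0\notin K$ by AE1) while $1\notin I$; the set $I$ is downward closed because $K$ is upward closed (AE2); and $I$ is closed under $\vee$ because the contrapositive of AE3 states exactly that $a\notin K$ and $b\notin K$ force $a\vee b\notin K$. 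Hence ``$\gamma(x)\cap A^c\notin K$'' is literally ``$\gamma(x)\cap A^c\in I$'', and ``$\gamma(x)\cap A\in K$'' is ``$\gamma(x)\cap A\notin I$''.

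With this substitution the operators read $A^{l_a} = \{x\in A : \gamma(x)\cap A^c\in I\}$ and $A^{u_a} = \{x\in X : \gamma(x)\cap A\notin I\}\cup A$, which is exactly the GOSI/IAD shape with fixed ideal $I$ and granulation $\gamma(x)=[x]_{C^a}$ in place of $<x>$. I would then verify the three claimed clauses directly from the ideal axioms. Inclusion $A^{l_a}\subseteq A\subseteq A^{u_a}$ is immediate from the set-builder forms. For normalization, $\emptyset^{l_a}=\emptyset$ and $X^{u_a}=X$ are trivial, while $\emptyset^{u_a}=\emptyset$ and $X^{l_a}=X$ both reduce to the single fact $\emptyset\in I$ (equivalently $\emptyset\notin K$): in the upper case $\gamma(x)\cap\emptyset=\emptyset\in I$ so no point qualifies, and in the lower case $\gamma(x)\cap X^c=\emptyset\in I$ so every point of $X$ qualifies.

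For monotonicity I would argue as in the proof of the Section~\ref{sei} theorem, using only downward closure of $I$. If $A\subset B$ and $x\in A^{l_a}$, then $B^c\subseteq A^c$ gives $\gamma(x)\cap B^c\subseteq\gamma(x)\cap A^c\in I$, and downward closure yields $\gamma(x)\cap B^c\in I$, so $x\in B^{l_a}$. Dually, if $x\in A^{u_a}$ with $\gamma(x)\cap A\notin I$, then $\gamma(x)\cap A\subseteq\gamma(x)\cap B$ forces $\gamma(x)\cap B\notin I$ (otherwise downward closure would return the smaller set to $I$), so $x\in B^{u_a}$; the remaining case $x\in A$ is absorbed by $A\subseteq B\subseteq B^{u_a}$.

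The genuinely delicate point is not any single calculation but the choice of \emph{which} conclusions to claim. The idempotence and lattice identities available for $\gamma(x)=<x>$ lean on the closure law ``$a\in<b>$ implies $<a>\subseteq<b>$'' (the Proposition preceding the Section~\ref{sei} theorem), which makes $<\cdot>$ behave like a quasi order and lets one replace $A$ by $A^{l_a}$ inside $\gamma(x)\cap(\,\cdot\,)$ without altering ideal-membership. A bare actual-contact neighborhood $[x]_{C^a}$ enjoys no such law, so those steps break and only the properties insensitive to the internal structure of $\gamma$ --- inclusion, normalization and monotonicity --- survive. Thus the main obstacle is essentially a negative one: confirming that the grill--ideal reduction does \emph{not} license the stronger clauses, which is precisely why the statement is restricted to the three listed.
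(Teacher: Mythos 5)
Your proof is correct and takes essentially the same route as the paper: the paper's own argument likewise rests on the fact that $K^c$ is a lattice ideal (equivalently, that the grill $K$ is upward closed), derives inclusion from the definitions, $X^{l_a}=X$ from $\emptyset\notin K$, and monotonicity from downward closure of $K^c$ for $l_a$ and upward closure of $K$ for $u_a$. Your explicit verification that the complement of a grill is an ideal, and your closing explanation of why idempotence and the lattice identities are not claimed for $\gamma(x)=[x]_{C^a}$, simply make explicit what the paper asserts in passing.
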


\begin{proof}
\begin{itemize}
\item {$A^{l_a} \subseteq A \subseteq A^{u_a}$ follows from definition.}
\item {$X^{l_a} = \{b: b\in X \,[b]\cap \emptyset \notin K \} = X$}
\item {If $A\subset B$, then $(\forall b\in A^{l_a})\, [b]\cap A^c \in K^c$ and $[b]\cap B^c \subseteq [b]\cap A^c$. Since $K^c$ is an ideal, it follows that $A^{l_a}\subseteq B^{l_a}$. 

If $b\in A^{u_a}$, then $[b]\cap A\in K$. Also $[b]\cap A \subseteq [b]\cap B$. But $K$ is a union of ultrafilters, so $[b]\cap B\in K$ and consequently $b\in B^{u_a}$. }
\end{itemize} 
\qed 
\end{proof}

Since the basic duality theorems for actual contact algebras (and contact algebras in particular) are in place\cite{DV2017}, the duality/inverse problem of such algebras enhanced with approximation operators may be solvable with ease. In the present author's opinion, the following formalism would be optimal:

\begin{problem}[Inverse Problem]
Given an algebraic system of the form\\  $A = \left\langle B, C^a, l_a, u_a \right\rangle$ with ${B, C_a}$ being an actual contact algebra and $l_a, u_a$ are unary operations satisfying:
\begin{align}
(\forall z)\, z^{l_a} \subseteq z \subseteq z^{u_a}\\
0^{u_a}=0 = 0^{l_a}\,;\, 1^{l_a} = 1 = 1^{u_a} \\
(\forall z, v) \, (z\subset v\longrightarrow a^{l_a}\subseteq v^{l_a} \,\&\,z^{u_a}\subseteq v^{u_a} \\ 
(\forall z)\,  z^{l_a l_a}= z^{l_a}\\
(\forall z) \,z^{u_a} \subseteq z^{u_a u_a}
\end{align}
under what additional conditions does there exist a neighborhood operator $\gamma$ and an actual clan $K$ that permit a definition of the operators $l_a, u_a$ according to
\begin{align}
z^{l_a} = \{x: x\in z \, \&\, \gamma (x)\cap z^c \notin K) \}         \tag{CG-Lower} \\
z^{u_a} = \{x: x\in B \, \&\, \gamma (x)\cap z \in K \} \cup z         \tag{CG-Upper}
\end{align}
\end{problem}

\section*{Further Directions and Remarks}

In this research, a relatively less explored area in the construction of point wise approximations by ideals has been investigated from new perspectives by the present author. The previously available theory has been streamlined and the  meaning of approximations in the approach has been explained. A concept of co-granular approximations has been introduced to explain the generation of related approximations including the popular point wise rough approximations. Further 
\begin{itemize}
\item {the methodology is generalized to specific modifications of granular operator spaces \cite{AM6999,AM9114} (called co-granular operator spaces) and in particular to lattices generated by collections of sets and lattice ideals,}
\item {the restrictions to general approximation spaces are relaxed, }
\item {knowledge interpretation in the contexts are proposed.}
\item {few meaningful examples and application areas have been proposed,}
\item {ideal based rough approximations are shown to be natural in spatial mereological contexts and }
\item {related inverse problems are posed.}
\end{itemize}
In a forthcoming paper, the fine details of the mentioned antichain based semantics and other algebraic semantics will be considered by the present author. 
\bibliographystyle{splncs.bst}
\bibliography{../bib/biblioam2016xx.bib}

\begin{thebibliography}{10}

\bibitem{AM240}
Mani, A.:
\newblock {Dialectics of Counting and the Mathematics of Vagueness}.
\newblock In Peters, J.F., Skowron, A., eds.: {Transactions on Rough Sets ,
  LNCS 7255}. Volume~XV.
\newblock Springer Verlag (2012)  122--180

\bibitem{ppm2}
Pagliani, P., Chakraborty, M.:
\newblock {A Geometry of Approximation: Rough Set Theory: Logic, Algebra and
  Topology of Conceptual Patterns}.
\newblock Springer, Berlin (2008)

\bibitem{YL96}
Yao, Y., Lin, T.Y.:
\newblock {Generalizing Rough Sets Using Modal Logics}.
\newblock Intelligent Automation and Soft Computing \textbf{2}(2) (1996)
  103--120

\bibitem{AT2011}
Abo-Tabl, A.:
\newblock {A comparison of two kinds of definitions of rough approximations
  based on a similarity relation}.
\newblock Information Sciences \textbf{181}(12) (2011)  2587--2596

\bibitem{ABA2008}
Allam, A., Bakeir, M., Abo-Tabl, A.:
\newblock {Some Methods for Generating Topologies by Relations}.
\newblock J. Malaysian Math. Sci. Soc. \textbf{31} (2008)  35--45

\bibitem{KYG2016}
Kandil, A., Yakout, M., Zakaria, A.:
\newblock {New Approaches of Rough Sets via Ideals}.
\newblock In John, S.J., ed.: {Handbook of Research on Generalized and Hybrid
  Set Structures and Applications for Soft Computing}.
\newblock IGI Global (2016)  247--264

\bibitem{AM6999}
Mani, A.:
\newblock {Antichain Based Semantics for Rough Sets}.
\newblock In Ciucci, D., Wang, G., Mitra, S., Wu, W., eds.: {RSKT 2015},
  Springer-Verlag (2015)  319--330

\bibitem{AM9114}
Mani, A.:
\newblock {Knowledge and Consequence in AC Semantics for General Rough Sets }.
\newblock In Wang, G., Skowron, A., Yao, Y., Slezak, D., eds.: {Thriving Rough
  Sets----10th Anniversary - Honoring Professor Zdzis{\l}aw Pawlak's Life and
  Legacy \& 35 years of Rough Sets,}. Volume 708 of {Studies in Computational
  Intelligence Series}.
\newblock Springer International Publishing (2017)  1--36

\bibitem{AM105}
Mani, A.:
\newblock {Algebraic Semantics of Similarity-Based Bitten Rough Set Theory}.
\newblock Fundamenta Informaticae \textbf{97}(1-2) (2009)  177--197

\bibitem{AM3600}
Mani, A.:
\newblock {Contamination-Free Measures and Algebraic Operations}.
\newblock In: {Fuzzy Systems (FUZZ), 2013 IEEE International Conference on},
  IEEE (2013)  1--8

\bibitem{AM9501}
Mani, A.:
\newblock {Algebraic Semantics of Proto-Transitive Rough Sets}.
\newblock In Peters, J.F., Skowron, A., eds.: {Transactions on Rough Sets LNCS
  10020}. Volume~XX.
\newblock Springer Verlag (2016)  51--108

\bibitem{QQL2014}
Xiao, Q., Li, Q., Guo, L.:
\newblock {Rough Sets Induced by Lattices}.
\newblock Information Sciences \textbf{271} (2014)  82--92

\bibitem{EHB2012}
Estaji, A., Hooshmandasl, M., Davvaz, B.:
\newblock {Rough set theory applied to lattice theory}.
\newblock Information Sciences \textbf{200} (2012)  108--122

\bibitem{BC1}
Banerjee, M., Chakraborty, M.K.:
\newblock {Rough Sets through Algebraic Logic}.
\newblock Fundamenta Informaticae \textbf{28} (1996)  211--221

\bibitem{AM3}
Mani, A.:
\newblock {Super Rough Semantics}.
\newblock Fundamenta Informaticae \textbf{65}(3) (2005)  249--261

\bibitem{du}
Duntsch, I.:
\newblock {Rough Sets and Algebras of Relations}.
\newblock In Orlowska, E., ed.: {Incomplete Information and Rough Set
  Analysis}, Physica, Heidelberg (1998)  109--119

\bibitem{IE2013}
Duntsch, I., Orlowska, E.:
\newblock {Discrete Duality for Rough Relation Algebras}.
\newblock Fundamenta Informaticae \textbf{127} (2013)  35--47

\bibitem{AM3930}
Mani, A.:
\newblock {Ontology, Rough Y-Systems and Dependence}.
\newblock Internat. J of Comp. Sci. and Appl. \textbf{11}(2) (2014)  114--136
  Special Issue of IJCSA on Computational Intelligence.

\bibitem{JC1977}
Duda, J., Chajda, I.:
\newblock {Ideals of binary relational systems}.
\newblock Casopis pro pestovani matematiki \textbf{102}(3) (1977)  280--291

\bibitem{SR2015}
Rudeanu, S.:
\newblock {On Ideals and Filters in Posets}.
\newblock Rev. Roumaine Math. Pures. Appl. \textbf{60}(2) (2015)  155--175

\bibitem{PVV1971}
Venkataranasimhan, P.:
\newblock {Pseudo-complements in posets}.
\newblock Proc. Amer. Math. Soc \textbf{28} (1971)  9--17

\bibitem{AP1984}
Gumm, H., Ursini, A.:
\newblock {Ideals in Universal Algebras}.
\newblock Algebra Universalis \textbf{19} (1984)  45--54

\bibitem{GDV06}
Dimov, G., Vakarelov, D.:
\newblock {Contact algebras and region-based theory of space: A proximity
  approach I}.
\newblock Fundamenta Informaticae \textbf{74}(2-3) (2006)  209--249

\bibitem{DV2017}
Vakarelov, D.:
\newblock {Mereotopologies with predicates of actual existence and actual
  contact }.
\newblock Forthcoming (2017)  1--20

\bibitem{LP2011}
Polkowski, L.:
\newblock {Approximate Reasoning by Parts}.
\newblock Springer Verlag (2011)

\end{thebibliography}
\end{document}